\definecolor{hot}{RGB}{65,105,225}
\theoremstyle{plain}
\newtheorem{theorem}{Theorem}[section]
\newtheorem{proposition}[theorem]{Proposition}
\newtheorem{corollary}[theorem]{Corollary}
\newtheorem{lemma}[theorem]{Lemma}
\theoremstyle{definition}
\newtheorem{definition}[theorem]{\sc Definition}
\numberwithin{equation}{section}
\newcommand\hot{\mathrm{h.o.t.}}
\newcommand\sS{\mathscr{S}}
\newcommand\cO{\mathcal{O}}
\newcommand\sW{\mathscr{W}}
\newcommand\sZ{\mathscr{Z}}
\newcommand\bx{\mathbf{x}}
\newcommand\ity{\infty}
\def\bR{\mathbb{R}}
\def\bC{\mathbb{C}}
\def\bP{\mathbb{P}}
\def\bX{\mathbb{X}}
\def\m{\setminus}
\newcommand{\fin}{\hspace*{\fill}$\square$}
\renewcommand{\d}{{\mathrm d}}
\DeclareMathOperator{\Sing}{Sing}                                    
\DeclareMathOperator{\Jac}{Jac}
\DeclareMathOperator{\mult}{mult}
\DeclareMathOperator{\ord}{ord} 
 \DeclareMathOperator{\grad}{grad}              
\DeclareMathOperator{\reg}{reg} 
\DeclareMathOperator{\rank}{rank}
\DeclareMathOperator{\Pol}{Pol}
\DeclareMathOperator{\cl}{closure}
\title[Morse numbers of complex polynomials]{Morse numbers of complex polynomials}
\author{Lauren\c{t}iu Maxim}
\address{L. Maxim: Department of Mathematics,  University of Wisconsin-Madison,  480 Lincoln Drive, Madison WI 53706-1388, USA, \newline
{\text and} Institute of Mathematics of the Romanian Academy, P.O. Box 1-764, 70700 Bucharest, ROMANIA.}
\email {maxim@math.wisc.edu}
\author[M. Tib\u{a}r]{Mihai Tib\u{a}r}
\address{M. Tib\u{a}r: Universit\' e de  Lille, CNRS, UMR 8524 -- Laboratoire Paul Painlev\'e, F-59000 Lille, France}  
\email {mihai-marius.tibar@univ-lille.fr}
\keywords{enumerative geometry, morsification, singularity, Morse number}
\subjclass[2010]{14N10, 14Q20, 32S20, 58K05}
\begin{document}


\begin{abstract}  
To a  complex polynomial function $f$ with arbitrary singularities  we associate the number of Morse points in a general linear Morsification $f_{t} :=  f - t\ell$.   We produce computable algebraic formulas in terms of invariants of $f$ for the numbers of stratwise Morse trajectories which abut, as $t\to 0$,  to some point of the space or at infinity.
\end{abstract}

\maketitle


\section{Introduction}

Finding the number of Morse points of a function is a classical topic of fundamental  interest and with many applications,  in real and in complex geometry. This problem occurs for instance in Lusternik-Schnirelmann category, Floer homology, maximum likelihood degree, etc.
We consider here Morsifications $f_{t}$ of a given complex polynomial function $f := f_{0}$ on a possibly singular complex affine variety $X\subset \bC^{N}$. We address the problem of effectively computing the number of Morse points  in case of linear Morsifications $f_{t}= f- t\ell$, and we show more precisely how these Morse points abut in clusters whenever the parameter $t$ converges to 0.  

 In the local setting, the case of holomorphic functions with an \emph{isolated singularity} $g\colon (\bC^{n+1}, 0) \to (\bC, 0)$ is classical,   extensively studied for instance in Milnor's book \cite{Mi}. Brieskorn showed in \cite[Appendix]{Bri} that in any Morsification of $g$, i.e., in a continuous family  $g_{t}$ of holomorphic functions, with $g_{0} = g$, for $t\not= 0$ close enough to the origin, the function $g_{t}$  has  a precise number of $\mu(g)$  Morse points which converge to the origin as $t\to 0$, where $\mu(g)$ denotes the \emph{Milnor number of $g$} at 0 and has topological and algebraic interpretations.  This result follows from the conservation of topology of the general fibre of $g_{t}$ in the deformation, and  extends to the setting of a stratified isolated singularity, see, e.g.,  \cite{Ti-bouquet}, \cite{MT2}.  Whenever the function $g$ has a nonisolated singularity,  Morsifications still exist but the conservation of topology does not hold anymore: as   $t\to 0$, the isolated Morse points of $g_{t}$ explode into the nonisolated singularity of $g_{0}=g$. An important problem in this context is to interpret the number of Morse points of the Morsification $g_{t}$ in terms of the topology of $g$.

The global setting of any non-constant polynomial function $f\colon \bC^{n+1} \to \bC$ is even more subtle.
First of all, the number of Morse points depends on the type of deformation\footnote{The number of Morse points in some Morsification of constant degree is bounded from above by $(d-1)^{n+1}$, where $d = \deg f$, see \cite{ST-defo}. This upper bound is realized for instance in deformations of type $f_{t} = f - th$ with $h$ a general homogeneous polynomial of degree $d$, cf \cite{ST-defo}.}.
If we stick to linear deformations, namely  $f_{t} := f -t\ell$, for some linear function $\ell$, then
it turns out that the number of Morse points of $f_{t}$
is determined by $f$  as soon as $\ell$ in chosen in a Zariski-open set of \emph{general linear functions}, and the parameter $t$ is close enough to $0\in \bC$.  Such a deformation will be called a \emph{linear Morsification of $f$}. 

The natural question:

\smallskip

\noindent  ``\emph{how many singular points are there in a linear Morsification $f_{t} = f - t\ell$ of a given polynomial $f$ and how to compute this number?}'' 

\smallskip
 
\noindent has been asked more recently in \cite{MRW2, MT1} in relation to the problem of estimating the Euclidean distance (ED) degree of (the complexification of) an algebraic statistical model. Let us recall that the ED degree ${\rm EDdeg}(X)$ of an irreducible affine variety $X \subset \bC^N$ counts the number of critical points of the square distance function $d_u(x):=\sum_{i=1}^N (x_i-u_i)^2$ on the smooth locus $X_{\reg}$ of $X$, for a {\it generic} data point $u \in \bC^N$, see \cite{DHOST}. When $X$ is the complexification of a real algebraic model $X_\bR$, ${\rm EDdeg}(X)$ measures the complexity of the nearest point problem for $X_\bR$. A purely topological interpretation of the ED degree, as a weighted Euler characteristic, was given in \cite{MRW1} by using results on the Euler obstruction. The motivation for considering a linear Morsification of $d_u$ was to understand such a nearest point problem for a data point $u \in \bC^N$ which is {\it non-generic} (e.g., it belongs to what is called the ED \emph{discriminant}). Results of \cite{MRW2}, see also \cite{MT1}, allow us to handle such situations by observing the limiting behavior of critical sets obtained for generic choices of data. More precisely, by adding some ``noise'' ${\epsilon} \in \bC^N$ to an arbitrary data point $u \in \bC^N$, one is back in the generic situation, and the limiting behavior of critical points of $d_{{u}+t {\epsilon}}$ on $X_{\rm reg}$ for $t \in \bC^*$ (with $\vert t \vert$ very small), as $t$ approaches the origin of $\bC$, yields valuable information about the initial nearest point problem. 
Indeed, one can write in this case $d_{{u}+t {\epsilon}}(x)=d_{u}(x)- t\ell(x) + c,$
with $\ell(x)=2 \sum_{i=1}^n  \epsilon_i  x_i$ and $c$ is a constant with respect to $x$.
So the critical points of $d_{{u}+t {\epsilon}}$ coincide with those of $d_{u}-t\ell$. 
As the genericity of   ${\epsilon}$ is equivalent to that of $\ell$,  the number of critical points of $d_{{u}+t {\epsilon}}$, and hence of $d_{u}-t\ell$, on $X_{\rm reg}$, for $t \in \bC^*$ very close to 0,  computes the ED degree  ${\rm EDdeg}(X)$ of $X$. 

 In the general framework of any polynomial function $f\colon X\to \bC$ on a singular irreducible affine variety $X\subset \bC^{N}$,  the paper  \cite{MRW2} gives formulas for the total number $\# \Sing (f_{t})_{|X_{\reg}}$ of Morse points on the regular part of $X$ in a linear Morsification $f_{t}$ under the condition that no Morse point escapes to infinity as $t\to 0$. 
It has  been found in \cite{MRW2, MT1} 
that the trajectories of the Morse singularities of $f_{t}$ converge to certain (finitely many)  points of the singular locus of $f$, and their places depend on the the choice of $\ell$.
The total number $n_{V}$ of Morse  singularities collapsing to  points on  strata $V\subset \Sing f$ are computed in  \cite{MRW2} in terms of the vanishing cycles of $f$, starting from 
 an Euler obstruction interpretation. Some more explicit formulas for $n_{V}$ have been obtained in \cite{MT1}.

 \smallskip
 
The new aim of our study is to give geometric and effectively computable formulas for the number of Morse points
which abut at each point, that we shall call here \emph{attractor}. This local number will be called  \emph{Morse index of an attractor}.  
 
Finding the Morse indices at all the attractors of the space, that we complete in Section \ref{s:infinity}, is nevertheless not the end of the story. One also observes the phenomenon of \emph{singularities escaping at infinity} when $t\to 0$. This has been only pointed out in \cite{MRW2, MT1}, and to some extent in \cite{ST-defo}, \cite{ST-exch}, \cite{ST-betti}, but remained mysterious until now.  
To get a taste of this phenomenon of Morse singularities of $f_{t}$ escaping to infinity, one may consider the following simple polynomial function  $f(x,y) = x+x^{2}y$ which has no singularity at all. However  its general linear Morsification  $f_{t} = f - t\ell$  has two Morse singularities, and both escape to infinity as $t\to 0$ -- see Example \ref{ex:classic} for more details.  

Even more surprisingly, we show that there are two ways in which 
Morse points can escape to infinity as $t\to 0$: their trajectories  may converge asymptotically to a certain fibre (like in the above-mentioned example, where this fibre is  $f^{-1}(0)$), or not, as Example \ref{ex:lau-s} shows.  As a matter of fact, there are polynomial functions $f$ such that the general linear Morsification has Morse points escaping to infinity, and such that no Morse trajectories are asymptotic to fibres of $f$ -- see  Example \ref{ex:lau-s-more} for such an instance.

 \medskip

We develop a new theoretical way of  counting the  total number of Morse points of $f_{t}:= f-t\ell$ on each stratum $V$ of the stratification $\sW$ of $X$ by an effective computation of the local Morse indices at each individual attractor.  Our procedure manages in particular the Morse trajectories at infinity and the Morse indices at ``attractors at infinity''.

We will actually give a geometric interpretation for  the Morse trajectories in all the strata of $X$  in terms of the \emph{affine relative polar curve},   as well as for the Morse indices of attractors, with an emphasis on those which appear at infinity.  
For the precise statements we refer to Theorem \ref{t:main2} and Corollary \ref{c:main2}.

\medskip 

\noindent {\bf Acknowlegments.}  Special thanks to Lauren\c tiu P\u aunescu and Cezar Joi\c ta for their help in finding and computing examples  for the less intuitive behavior at infinity. The authors acknowledge support from the project ``Singularities and Applications'' - CF 132/31.07.2023 funded by the European Union - NextGenerationEU - through Romania's National Recovery and Resilience Plan.


\section{General linear Morsifications}\label{s:linmorsif}

Let $X \subset \bC^N$ be a closed irreducible affine variety of dimension $n+1\ge 2$,  let $f\colon X \to \bC$ be the restriction to $X$ of a polynomial function $F: \bC^{N} \to \bC$, and let us assume that $f$ is non-constant.  We shall call $f$ a \emph{non-constant polynomial function on $X$}.

One may endow $X$ with a Whitney stratification $\sW$ with finitely many strata  such that its regular part $X_{\reg}$ is a stratum. We may moreover choose the coarsest such stratification (with respect to inclusion of strata), which exists, and which is called \emph{the canonical Whitney stratification of $X$}.

 Let $\Sing_{\sW}f :=\bigcup_{V\in \sW} \Sing f_{|V}$ denote the stratified singular locus of $f$ with respect to $\sW$. This is a closed set  (by the Whitney (a)-regularity condition) distributed in a finite number of fibres of $f$ which are called \emph{singular fibres}.  
  
 Let $\sS$ be a Whitney stratification of $X$ which is the coarsest refinement of the stratification $\sW$ such that $\Sing_\sW f$ is a union of strata of $\sS$.  Note that $\sS$ also has finitely many strata.
 

 
\begin{definition}[Stratified Morse function, \cite{GM}]\label{d:morsef}
We say that the restriction $h\colon X\to \bC$ of a polynomial function to $X$ is a \emph{stratified Morse function} with respect to the stratification $\sS$ if $h$ has only stratified Morse singularities on the positive dimensional strata of $\sS$, and $h$ is general at all $0$-dimensional strata  (i.e., the differential $dh_x$ at such a point stratum $x$ is not the limit, for $y\to x$, of covectors $\eta_y$ which vanish on the tangent space at $y$ to the stratum containing $y$).  
\end{definition}

 For some linear function $\ell$, we consider the 
 map $(\ell,f) \colon X \to \bC\times \bC$ and its stratified singular locus $\Sing_{\sW}(\ell,f) := \bigcup_{V\in \sW} \Sing (\ell,f)_{|V}$, where $\Sing (\ell,f)_{|V} := \{x\in V \mid \rank \Jac(\ell_{|V},f_{|V}) <2 \}$. This is a closed set, due to the Whitney regularity of the stratification $\sW$.
 
\begin{definition}[Affine polar locus of $f$]\label{d:polarlocus}
 For  a positive dimensional stratum $V\in \sW$,  one defines the polar locus of $f$ of the stratum $V$, relative to $\ell$,  by:
\[ \Gamma_{V} (\ell,f) :=  \overline{\Sing (\ell,f)_{|V} \setminus \Sing f_{|V}} \subset {\overline V}.
 \]
 The algebraic affine set $\Gamma_{\sW} (\ell,f) := \bigcup_{V\in \sW}\Gamma_{V} (\ell,f)$ is called
 the {\it (affine) polar locus} of $f$ relative  to the linear function $\ell$ and to the stratification $\sW$. 
 \end{definition}
Let us also observe that the polar locus $\Gamma_{\sW} (\ell,f)$ contains all strata $V\in \sW$ of dimension 1 which are not included in $\Sing_{\sW}f$. 

In the spirit of Kleiman's transversality theorem \cite{Kl}, one has the following Bertini-Sard type result in the global setting:
 
 \begin{theorem} \cite[Polar Curve Theorem 7.1.2]{Ti-book}\label{t:bertini}
Let $f\colon X\to \bC$ be
an algebraic function on an algebraic subset $X\subset \bC^N$.
There is a Zariski-open connected subset $\Omega_{f}$ of the dual projective space
 $\check\bP^{N-1}$ such that $\ell$ is a Morse function  with respect to the stratification $\sS$, and the polar locus $\Gamma_{\sW} (\ell,f)$  is either a reduced curve for all $\ell\in \Omega_f$, or it is
empty.
\fin
\end{theorem}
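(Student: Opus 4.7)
The proof combines a stratum-by-stratum Bertini--Kleiman argument with a generic smoothness statement applied to a suitable incidence variety. Since $\sS$ and $\sW$ consist of finitely many strata, $\Omega_f$ will be obtained as the intersection of finitely many Zariski-open subsets of $\check\bP^{N-1}$, one condition per stratum.

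\textbf{Morse genericity.} For each positive-dimensional stratum $V \in \sS$, the classical incidence
\[
\bigl\{ (x,[\ell]) \in V \times \check\bP^{N-1} \bigm| \ell \perp T_xV \bigr\}
\]
is a projective bundle of rank $N-\dim V - 1$ over $V$, hence of dimension $N-1$. Its projection to $\check\bP^{N-1}$ has generically $0$-dimensional fibres, so $\Sing(\ell|_V)$ is finite for generic $\ell$; a further Zariski-open condition ensures non-degeneracy of the Hessian at each such point, yielding the Morse property. The condition that $\ell$ take distinct values on the $0$-dimensional strata of $\sS$ is also Zariski-open. Intersecting over finitely many strata of $\sS$ yields a Zariski-open set $\Omega_f^{M} \subset \check\bP^{N-1}$.

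\textbf{Polar locus.} For each positive-dimensional stratum $V \in \sW$, I would set up the incidence variety
\[
\mathcal I_V \;=\; \bigl\{ (x,[\ell]) \in (V\setminus \Sing f|_V) \times \check\bP^{N-1} \bigm| x \in \Sing((\ell,f)|_V) \bigr\}.
\]
At a point $x\in V\setminus \Sing f|_V$, the fibre of $f|_V$ is smooth of codimension $1$ in $V$, and the condition $\rank \Jac(\ell|_V,f|_V)(x)<2$ is equivalent, by $df|_V(x)\neq 0$, to the vanishing of $d\ell$ on the tangent space to this fibre at $x$. This is a linear condition on $\ell$ cutting out a projective subspace of $\check\bP^{N-1}$ of dimension $N-\dim V$, so the first projection $\mathcal I_V \to V\setminus \Sing f|_V$ is a projective bundle and $\mathcal I_V$ is smooth of dimension exactly $N$. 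The second projection $\pi_V\colon \mathcal I_V \to \check\bP^{N-1}$ then falls into one of two cases: either $\pi_V$ is not dominant, in which case $\Gamma_V(\ell,f)=\emptyset$ for every $\ell$ outside the proper Zariski-closed image; or $\pi_V$ is dominant, and then generic fibres of $\pi_V$ have dimension exactly $1$ and, by generic smoothness of $\pi_V$ on the smooth source $\mathcal I_V$, the generic fibre is smooth, hence a reduced curve.

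\textbf{Assembly and the main obstacle.} Setting $\Omega_f$ to be the intersection of $\Omega_f^{M}$ with the finitely many Zariski-open sets produced above, one gets a single open set on which $\ell$ is stratified Morse and on which each contribution $\Gamma_V(\ell,f)$ is either empty or a reduced curve. Since distinct strata are disjoint, reducedness of $\Gamma_{\sW}(\ell,f)$ on the open parts is automatic, and the union is a reduced curve if at least one $\pi_V$ was dominant, empty otherwise. The delicate point—and the step I would expect to be the main obstacle—is controlling the closures $\overline{\Sing(\ell,f)|_V \setminus \Sing f|_V}$ as they approach the incident boundary strata in $\overline V\setminus V$: one must verify that no higher-dimensional or embedded components are picked up in the limit. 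This is exactly where Whitney $(b)$-regularity of $\sW$ is used, guaranteeing that the tangent planes along $V$ have well-behaved limits at points of lower-dimensional strata and allowing a further shrinking of $\Omega_f$ by a finite number of transversality conditions to control these limiting fibres.
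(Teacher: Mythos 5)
The paper does not actually prove this statement: it is imported verbatim from \cite{Ti-book} (Polar Curve Theorem 7.1.2), and the box after the statement marks it as quoted without proof, so there is no internal argument to compare against. That said, your incidence-variety argument is essentially the standard route to such Bertini--Kleiman statements and is close in spirit to the proof in the cited reference, which works with the projectivized conormal space of $V$ and the relative conormal space of $f|_V$ --- precisely the closures of your two incidence varieties --- and projects them to $\check\bP^{N-1}$. The polar-locus half of your argument is sound: the fibre of $\mathcal I_V\to V\setminus\Sing f|_V$ is a projective subspace of dimension $N-\dim V$, so $\mathcal I_V$ is smooth of dimension $N$, and generic smoothness of $\pi_V$ (characteristic $0$) yields, stratum by stratum, the dichotomy ``empty or reduced curve''; a finite union over the strata of $\sW$ inherits both properties.

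Two points need attention. First, a genuine gap in the Morse half: Definition \ref{d:morsef} refers to stratified Morse functions in the sense of \cite{GM}, and that notion requires, besides nondegeneracy of the Hessian of $\ell|_V$ at a critical point $p\in V$, that the covector $d\ell(p)$ be nondegenerate, i.e.\ that it not annihilate any generalized tangent space $\lim_{x_i\to p}T_{x_i}W$ for strata $W$ with $V\subset\overline W\setminus W$. Your sketch secures only the tangential conditions. The repair is the same kind of dimension count you already use: the degenerate covectors supported over points of $V$ and coming from a larger stratum $W$ lie in the part of $\overline{\bP T^*_W\bC^N}$ sitting over $\overline W\setminus W$, which has dimension at most $N-2$, so its image in $\check\bP^{N-1}$ is a proper closed subset to be removed from $\Omega_f$. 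Second, the ``main obstacle'' you single out at the end is not in fact an obstacle for this statement: $\Gamma_V(\ell,f)$ is by definition the topological closure, with reduced structure, of the fibre $\pi_V^{-1}([\ell])$, and closing up a reduced set of dimension at most $1$ cannot create higher-dimensional or embedded components. Whitney regularity is needed elsewhere in the paper (for instance for the closedness of $\Sing_{\sW}(\ell,f)$ and for the Thom condition used in Proposition \ref{t:localind}), but not to conclude that the polar locus is a reduced curve.
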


\subsection{The topological behavior of linear Morsifications at infinity}

We show here that for a more generic linear function $\ell$ and for $t\neq 0$ with $\vert t \vert$ small enough, the polynomial function $f_t:=f-t\ell$ is not only a polynomial Morse function  on $X$,  but moreover it has  a``good behavior at infinity''.

 
 Let us first define what we mean by good behavior at infinity.
It is well-known that a polynomial function $F\colon \bC^{N} \to \bC$ may fail to be a C$^{\infty}$-fibration even if it has no singularities at all. The simplest and well-known such example, pointed out by Broughton \cite{Bro},  is  $F(x,y)= x+x^{2}y$. It has an empty singular locus,  the fibre over $0$ is isomorphic to $\bC \sqcup \bC^{*}$, whereas all the other fibers are isomorphic to $\bC^{*}$.  In this case one says that $F$ has an  \emph{atypical fibre at infinity}, i.e., the fibre $F^{-1}(0)$ is atypical, and it turns out that it is the only atypical fibre.  Several studies have been conducted in order to understand how to control this phenomenon, see \cite{Ti-book} for details and for extensive bibliography up to the year 2007, and see \cite{ST-polardeg} for one of the most recent  applications.  Here is the  formal definition:

\begin{definition}\label{d:singinfty}
 Given a non-constant polynomial function $f\colon X \to \bC$, we say that the fibre $f^{-1}(\lambda)$ is \emph{not atypical at infinity} if and only if  $\dim [f^{-1}(\lambda) \cap \Sing_{\sW}f ] \le 0$  and there exist a (small) disk $D\subset \bC$ centred at $\lambda$
 and some compact set $K \subset X$ such that the restriction $f_{|}: f^{-1}(D)\cap (X\setminus K) \to \bC$ is a trivial stratified fibration.  
\end{definition}
It is well-known that any polynomial function $f\colon X \to \bC$ can have only a finite number of fibers which are atypical at infinity\footnote{See \cite{Ti-book} for a proof and other details.}, and that they may occur even if  $\Sing_{\sW} f = \emptyset$.

We  show in the following that for any $f$,  its linear Morsification $f_{t} = f - t\ell$ is a polynomial without atypical fibers at infinity provided that $\ell$ is a general enough linear form.  It is worth pointing out here that this property is far from holding for any linear Morsification, as the following example shows: $f_{t}=x^{2}y + tx$ is a linear Morsification of $f=x^{2}y$ for all $t\not= 0$, it has no singularity at all, but 
the fibre $f_{t}^{-1}(0)$ is atypical at infinity for all $t\not= 0$.

 \begin{theorem}\label{t:lefschetz}
Let $X\subset \bC^N$ be an affine variety and let $f\colon X\to \bC$ be a  non-constant polynomial function.  There is a Zariski-open connected subset $\Lambda_{f} \subset \check\bP^{N-1}$ of linear functions $\ell\colon \bC^N\to \bC$ such that, for any $\ell\in \Lambda_{f}$ and any general value $t\in \bC$,  
the polynomial function $f_t: =f-t\ell$ has only Morse $\sW$-stratified singularities on $X$, and has no atypical fibers at infinity.
  \end{theorem}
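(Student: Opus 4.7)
I plan to split the proof into two parts: stratified Morse singularities (which follows by a transversality argument from Theorem \ref{t:bertini}) and no atypical fibres at infinity (the main new content, requiring a stratified analysis at infinity via a projective compactification).

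\emph{Morse part.} Start from the Zariski-open set $\Omega_{f}\subset\check\bP^{N-1}$ of Theorem \ref{t:bertini}. For $\ell\in\Omega_{f}$, the polar locus $\Gamma_{\sW}(\ell,f)$ is a reduced curve (possibly empty) and $\ell$ is a stratified Morse function on $\sS$. For $t\neq 0$, a point $x$ on a positive-dimensional stratum $V\in\sW$ lies in $\Sing_{\sW}f_t$ iff $d(f_{|V})(x)=t\,d(\ell_{|V})(x)$. If $d(\ell_{|V})(x)\neq 0$, this forces $x\in\Gamma_{V}(\ell,f)$; otherwise both differentials vanish, placing $x\in\Sing\ell_{|V}\cap\Sing f_{|V}$, a set emptied by a further generic choice of $\ell$. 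Hence $\Sing_{\sW}f_t\subset\Gamma_{\sW}(\ell,f)$. On the smooth locus of each irreducible component of the polar curve, the ``slope'' $s:=d(f_{|V})/d(\ell_{|V})$ is a well-defined meromorphic function and $\Sing_{\sW}f_t=s^{-1}(t)$. A Kleiman--Bertini/Sard argument then ensures that for $t$ in a Zariski-open subset of $\bC$, $s^{-1}(t)$ consists of finitely many smooth points of $\Gamma_{\sW}(\ell,f)$ at which the Hessian of $f_t|_V$ is non-degenerate, i.e., Morse singularities.

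\emph{At infinity.} Fix a projective closure $\bar X\subset\bP^N$ and extend $\sW$ to a Whitney stratification $\sW^\infty$ of $\bar X$ compatible with $X^\infty:=\bar X\cap H^\infty$. I refine $\Omega_f$ to a Zariski-open $\Lambda_f$ by further requiring that $\ell$ cuts out a hyperplane transverse to the closures of all strata of $\sW^\infty$ contained in $X^\infty$, so that the rational extension $\bar\ell$ is a generic pencil with respect to $\sW^\infty$. Consider the two-parameter rational map $\bar\Phi:=(\bar\ell,\bar f)\colon \bar X\to \bP^1\times\bP^1$: the fibre $f_t^{-1}(\lambda)$ is the pullback of the affine line $\{b-ta=\lambda\}\subset\bC^2\subset\bP^1\times\bP^1$. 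Unwinding Definition \ref{d:singinfty}, $\lambda\in\bC$ is atypical at infinity for $f_t$ iff the Thom--Whitney regularity of the closure of $f_t^{-1}(\lambda)$ with respect to $\sW^\infty$ fails at some point of $X^\infty$. Let $A\subset\bC_t\times\bC_\lambda$ be the constructible locus of such pairs.

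The crux is to show that $A$ has no irreducible component projecting dominantly onto the $t$-axis. By a stratified Bertini--Sard argument applied to $\bar\Phi$ along the strata of $\sW^\infty$ inside $X^\infty$, together with the genericity of $\ell\in\Lambda_f$, $A$ is a proper algebraic subset of dimension $\le 1$; a component dominating the $t$-axis would correspond to an algebraic family of Thom--Whitney failures along a positive-dimensional subvariety of $X^\infty$, which is incompatible with the transversality of $\bar\ell|_{X^\infty}$ to $\sW^\infty$. Consequently, for $t$ outside a finite subset of $\bC$, $A\cap(\{t\}\times\bC)=\emptyset$, and $f_t$ has no atypical fibres at infinity. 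The hard part is precisely this last step: ruling out algebraic families of atypical-at-infinity values requires a genuinely two-parameter genericity argument at infinity, not merely the single-polynomial transversality provided by Theorem \ref{t:bertini}.
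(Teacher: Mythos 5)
Your split into a ``Morse part'' and an ``at infinity part'' already diverges from the paper, which handles both properties in one stroke by a graph embedding: it maps $X$ isomorphically onto $i(X)=\{x_{N+1}=f(x)\}\subset\bC^{N+1}$, so that $f_t=f-t\ell$ becomes the restriction to $i(X)$ of the \emph{linear} form $l_t=x_{N+1}-t\sum a_ix_i$, and then invokes the theory of generic affine Lefschetz pencils on the projective closure $\overline{i(X)}\subset\bP^{N+1}$: for $l$ in a Zariski-open set, $\{l=0\}\cap H^{\infty}$ is transverse to the strata of $\overline{i(X)}\cap H^{\infty}$ and $l$ is stratified Morse on $i(X)$, and both conditions persist for $l_t$ with $t\ne 0$ small. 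That single transversality statement, applied to the closure of the \emph{graph} of $f$, is what delivers ``no atypical fibres at infinity''. Your compactification is $\overline{X}\subset\bP^N$ together with the pencil $(\ell,f)$; this does not see the asymptotic behaviour of $f$ (for that one needs the space $\bX\subset\overline{X}\times\bP^1$ of Definition \ref{d:infinity}, or equivalently $\overline{i(X)}$), and this is where your argument breaks.

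Concretely, the gap is the step you yourself flag as ``the hard part''. You assert that a component of $A$ dominating the $t$-axis ``would correspond to an algebraic family of Thom--Whitney failures along a positive-dimensional subvariety of $X^{\infty}$'', but nothing forces the failure locus to move in $X^{\infty}$: for each fixed $t$ the set of atypical values is already finite, and all the regularity failures can sit over one and the same point of $X^{\infty}$ for every $t$. This is exactly what happens in the paper's own example $f_t=x^2y+tx$: the atypical value $0$ and the bad point $[0;1;0]\in H^{\infty}$ persist for all $t\ne 0$, even though the line $\{x=0\}$ is perfectly transverse to $H^{\infty}$ in $\bP^2$. So transversality of $\{\ell=0\}$ to the strata of $X^{\infty}$ inside $\bP^N$ is not the right genericity condition and cannot be the mechanism that kills such components; the mechanism has to live on the graph closure. (Two smaller issues: ``atypical at infinity iff Thom--Whitney regularity fails'' is only an implication in the direction you need, not an equivalence; and in the Morse part, a stratified Morse singularity in the sense of Goresky--MacPherson also requires nondegeneracy of $\d f_t$ with respect to the generalized tangent spaces of incident strata, which your Hessian-on-$V$ argument via the slope function $s$ does not address.)
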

\begin{proof}
 Let us consider the following graph embedding:
 \[ i: X\hookrightarrow \bC^{N+1},  \ \ (x_{1}, \ldots , x_{N}) \mapsto \big( x_{1}, \ldots , x_{N}, f(x_{1}, \ldots , x_{N})\big)
 \]
 which is an algebraic isomorphism on its image. It is known that the Whitney stratification is invariant under an analytic isomorphisms since the Whitney $(b)$-condition is equivalent with the Kuo-Verdier condition $(w)$, see e.g. \cite{Ve}. Let then $\sZ$ be the Whitney stratification of $i(X)$ which is the image of the Whitney stratification $\sW$ of $X$ by the isomorphism $i$.
 Let $\overline{i(X)} \subset \bP^{N+1}$ be the projective closure of the affine variety $i(X)$, and let $\overline{\sZ}$
 be  a Whitney stratification of  $\overline{i(X)}$ which extends the stratification $\sZ$ such that the intersection  $\overline{i(X)}\cap H^{\infty}$ with the hyperplane at infinity $H^{\infty} \simeq \bP^{N}\subset \bP^{N+1}$ is a union of strata.
 
 We say that a linear form $l\in \check \bP^{N}$ is \emph{general with respect to $i(X)$} iff the projective hyperplane $\{l=0\} \subset H^{\infty}$ is stratified-transversal to  $\overline{i(X)}\cap H^{\infty}$ and {$l\colon \bC^{N+1} \to \bC$} has only Morse stratified singularities on the positive dimensional strata of $i(X)$.  In this case we say that \emph{$l$ defines an affine Lefschetz pencil on  $i(X)$},  i.e., the pencil of affine hyperplanes $\{l=\alpha\}\subset \bC^{N+1}$ for $\alpha\in \bC$. The classical stratified Lefschetz theory  applied to the  stratified affine variety $i(X)$ ensures that there exists a Zariski-open subset $\Lambda \subset \check \bP^{N}$ of linear forms $l \colon \bC^{N+1}\to \bC$ such that $l$ defines a Lefschetz pencil on  $i(X)$.
  Such a  general linear form $l\in \Lambda$ looks like:
 $$l = x_{N+1} - \sum_{i=1}^{N}a_{i} x_{i},$$ 
and the genericity of the coefficients $a_{i} \in \bC$ ensures that the linear form 
$$l_{t} := x_{N+1} - t \sum_{i=1}^{N}a_{i} x_{i}$$
 defines a Lefschetz pencil on  $i(X)$, for all $t\in \bC^{*}$ close enough to 0.  In particular,  $l_{t}$ has no atypical fibres at infinity in the sense of Definition \ref{d:singinfty}.
 
  The pull-back of $l_{t}$ by the isomorphism $i$ is then precisely the polynomial function $f_{t} = f -t\ell$, where $\ell :=\sum_{i=1}^{N}a_{i} x_{i}$. It follows that $f_{t}$ is a linear Morsification and has no atypical fibers at infinity in the sense of Definition \ref{d:singinfty}.  
  The Zariski open subset of these generic linear functions $\ell =\sum_{i=1}^{N}a_{i} x_{i}$ will then be our  $\Lambda_{f}\subset  \check\bP^{N-1}$. 
 \end{proof}
 

\begin{definition}[General linear Morsification]\label{d:linmorsif}
    Let  $\Omega  := \Omega_{f} \cap \Lambda_{f} \subset  \check\bP^{N-1}$ be the Zariski open connected subset of linear functions   $\ell\colon \bC^N \to \bC$ which satisfy the conditions of Theorem \ref{t:bertini} and of Theorem \ref{t:lefschetz}.
     A linear function $\ell\in \Omega$ will be called \emph{general linear function} with respect to $X$, and  
      the linear deformation $f_t:=f- t \ell$ will be called  a \emph{general linear Morsification of $f$}.
\end{definition}

 It follows from the above definition that 
the restrictions  $f_{| \{\ell =b\}\setminus \Sing_{\sW} f}$ and $\ell_{|\{f = a\}}$,  for generic complex numbers $a, b$, are both stratified  Morse functions relative to the induced stratifications on the slices.

 Any general linear Morsification $f_{t}$ of $f$ has thus isolated singularities only. These are either the stratified Morse points of $f_{t}$, or the 0-dimensional strata of $X$ -- which are counted by definition as singularities of any function on $X$. It also turns out from the above definition  (using the connectivity of $\Omega$) that the number of stratified Morse points of $f_{t}$ on each stratum $V\in \sW$ is independent of  $\ell \in \Omega$. 
 
 In case  $X=\bC^{n+1}=\bC^N$, the number of Morse points of a general linear Morsification of $f$ will be denoted by $m_{f}$. We will call it  \emph{the Morse number of $f$}.

\begin{corollary}\label{c:variationtop}
Let  $f \colon \bC^{n+1}\to \bC$ be a non-constant polynomial. Let $f_{t}:= f - t\ell$, for some $\ell \in \Omega$, be a general linear Morsification, and let  $m_{f}$ be  
 the Morse number of $f$.  Let $b_{n}(f_{t})$ and  $b_{n}(f)$ denote the top Betti numbers of the general fibre of $f_{t}$ and of $f$, respectively. 
  Then $m_{f} = b_{n}(f_{t})$ and   $m_{f} \ge b_{n}(f)$.
 \end{corollary}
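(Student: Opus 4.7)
For the equality $m_f = b_n(f_t)$, my plan is to combine Theorem~\ref{t:lefschetz} with the classical bouquet theorem for polynomials tame at infinity. Since $X = \bC^{n+1}$ is smooth, the canonical Whitney stratification consists of a single stratum, so Theorem~\ref{t:lefschetz} provides a polynomial $f_t$ with only ordinary Morse singularities and no atypical fibres at infinity. By Broughton's bouquet theorem~\cite{Bro} (see also \cite{Ti-book}), the generic fibre of any such polynomial is homotopy equivalent to a wedge of $n$-spheres, one for each unit of the total global Milnor number of $f_t$. Since all $m_f$ critical points of $f_t$ are Morse, each of Milnor number~$1$, this total is $m_f$, giving $b_n(f_t) = m_f$.

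For the inequality $m_f \ge b_n(f)$, my plan is to compare the two generic fibres inside a single smooth one-parameter family. Fix a small disc $D \subset \bC$ around $0$ and pick $c \in \bC$ generic, in the sense that $c$ is a regular value of $f$ and of $f_t$ for every $t \in D$, and $c$ avoids the finite set $f(\Gamma_{\sW}(\ell,f) \cap \{\ell = 0\})$. Set
\[
Y := \{(x,t) \in \bC^{n+1} \times D : f(x) - t\ell(x) = c\}, \qquad \pi \colon Y \to D,
\]
with $\pi$ the second projection. A direct computation of the differential of $F := f - t\ell - c$ shows that for generic $c$ and $\ell$ the total space $Y$ is smooth (as $dF = 0$ would simultaneously require $df = t\,d\ell$, $\ell(x) = 0$ and $f(x) = c$) and $\pi$ is a smooth submersion (as $\pi$ fails to be submersive only when $c$ is a critical value of some $f_t$). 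Hence $Y_0 = f^{-1}(c)$ is a general fibre of $f$ realising $b_n(f)$, and $Y_t = f_t^{-1}(c)$ is a general fibre of $f_t$ realising $b_n(f_t) = m_f$.

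The main step is to construct an injective specialisation map $\sigma \colon H_n(Y_0) \hookrightarrow H_n(Y_t)$ for $0 < |t|$ small. Although $\pi$ is not proper, its smooth-submersion property supports the local form of Ehresmann's theorem around compact subsets of fibres: lifting any vector field on $D$ via $d\pi$ and integrating on a given compact set produces a $\pi$-preserving diffeomorphism from a tubular neighbourhood of the compact set to a product. Applied to a compact cycle representative of a class in $H_n(Y_0)$, this defines $\sigma$ on singular homology. Injectivity follows by applying the same local construction to a bounding chain: if $\sigma([Z])$ is represented by $Z_t = \partial W_t$ in $Y_t$ with $W_t$ compact, the local trivialisation around $W_t$ produces a chain $W_0 \subset Y_0$ with $\partial W_0$ homologous to $Z$, so $[Z] = 0$ in $H_n(Y_0)$. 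This yields $b_n(f) = b_n(Y_0) \le b_n(Y_t) = m_f$.

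The main obstacle I anticipate is precisely the non-properness of $\pi$: it is what prevents $Y_0$ and $Y_t$ from being diffeomorphic, and it forces us to work throughout with compact cycles and chains and to choose $c$ generic enough that $Y$ is smooth all the way down to $t=0$. The discrepancy $m_f - b_n(f)$ will then be accounted for by Morse trajectories of $f_t$ that escape to infinity as $t \to 0$, each creating an extra $n$-cycle in $Y_t$ lying outside the image of $\sigma$ (as illustrated by Broughton's example $f = x + x^2 y$, where $m_f = 2 > 1 = b_1(f)$).
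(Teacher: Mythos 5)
Your first half, $m_f=b_n(f_t)$, is essentially the paper's own argument: Theorem \ref{t:lefschetz} supplies a Morsification with only Morse singularities and no atypical fibres at infinity, and the bouquet theorem then identifies $b_n(f_t)$ with the number of Morse points. One caveat on attribution: Broughton's bouquet theorem is stated for \emph{tame} polynomials, a strictly stronger hypothesis than ``no atypical fibres at infinity''; the version you actually need is the one of \cite{ST-duke}, which is what the paper invokes.

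The second half is where you depart from the paper and where there is a genuine gap. The paper does not reprove $b_n(f_t)\ge b_n(f)$; it quotes it as the semicontinuity result \cite[Proposition 2.1]{ST-exch}. You try to prove it directly via a specialisation map $\sigma\colon H_n(Y_0)\to H_n(Y_t)$. Defining $\sigma$ is fine: a compact cycle, together with the finitely many bounding chains needed for well-definedness, lies in a compact $K\subset Y_0$, the lifted vector field integrates for a uniform time near $K$, and this trivialises the family over a disk $D_{\epsilon(K)}$ centred at $0$; you may then shrink $t$. The injectivity step, however, runs the transport the other way: given $Z_t=\partial W_t$ in $Y_t$, you trivialise near the compact set $\lvert W_t\rvert$ and claim this ``produces a chain $W_0\subset Y_0$''. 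That trivialisation exists only over a disk centred at $t$ of radius $\epsilon(\lvert W_t\rvert)$, and nothing guarantees $\epsilon(\lvert W_t\rvert)>\lvert t\rvert$: here $t$ is already fixed and cannot be shrunk after $W_t$ is chosen. As the parameter moves from $t$ towards $0$ the transported chain can escape to infinity --- precisely the non-properness you name as the main obstacle, and which your argument does not overcome. The gap is not cosmetic: if this back-transport were legitimate it would equally carry cycles of $Y_t$ to $Y_0$ and yield $b_n(f)\ge b_n(f_t)$, which is false already for $f=x+x^{2}y$, where $b_1(f)=1<2=b_1(f_t)$. The honest proof of the semicontinuity requires controlling the family at infinity (via a compactification and the vanishing cycles there), which is exactly why the paper delegates this step to \cite[Proposition 2.1]{ST-exch}; citing that result is the cleanest repair.
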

   
\begin{proof}
For any $t$ in some small enough disk $D\subset \bC$ at the origin, the general fibre  $G_{t}$ of $f_{t}$ is affine of dimension $n$, thus its homology groups are trivial in dimension $\ge n+1$. 
In case $t\not= 0$,  by Theorem \ref{t:lefschetz}, the variation of topology of the fibres of $f_{t}$ is concentrated at the singularities of 
$f_{t}$, which are isolated.  The general fibre $G_{t}$  of $f_{t}$ is then homotopy equivalent to a 
bouquet of spheres of dimension $n$, one for each Morse point, cf \cite{ST-duke}, \cite{Ti-book}. This shows the equality $\mu_{f_{t}}= b_{n}(f_{t})$, where  $\mu_{f_{t}}$ is the total Milnor number of $f_t$, and by definition we have the equality $m_{f} = \mu_{f_{t}}$.

 By \cite[Proposition 2.1]{ST-exch}, in any analytic deformation of a polynomial function $f\colon \bC^{n+1}\to \bC$, and in particular in our linear deformation $f_{t}$ with $f_{0} := f$, the following semi-continuity of the top Betti number holds for the top Betti numbers of the general fibres: $b_{n}(f_{t}) \ge b_{n}(f_{0})$.
By the equality $\mu_{f_{t}} = b_{n}(f_{t})$ shown just before, it then follows that $m_{f} \ge b_{n}(f)$.
\end{proof}

\section{Attractors of Morse trajectories}\label{s:atract}
We address here the problem of finding the trajectories 
of stratified Morse points of $f_{t} = f-t\ell$ and of their limits when $t\to 0$.

\subsection{Ends of Morse trajectories}\label{s:detect}\

Whenever a trajectory of a Morse singularity of $f_{t}$ ends at some point $p\in X$ as $t\to 0$, this point
belongs to $\Sing_{\sW}f$. 
 Let  $\ell\in \Omega$, $V\in \sW$ such that $V\not\subset \Sing_{\sW}f$, and  $p\in \Sing_{\sW}f$.  Let us set:
\begin{equation}\label{eq:morseindex}
 m_{p,V}(f) := \# \{ \mbox{Morse points of  } {f_{t}}_{|V}  \mbox{ which converge to $p$ as } t\to 0\}.
\end{equation}

We aim to determine the subset of  $\Sing_{\sW}f$ of limit points to which  at least one stratified Morse singularity  of the restriction ${f_{t}}_{|V}$ abuts when $t\to 0$, namely:
\[M_{V}(f) := \{ p\in \Sing_{\sW}f \mid m_{p,V}(f) >0\} .\]
  This subset of $\overline{V}$ depends on the  choice of $\ell\in \Omega$  whenever $\Sing f_{|V}$ is positive dimensional. However,  since the Morse points of  ${f_{t}}_{|V}$ depend continuously on the choice of $\ell \in \Omega_f$, which is a Zariski open and connected subset of $\check\bP^{N-1}$,  it turns out that the numbers $m_{p,V}(f)$  do not depend on this choice.
 


Let $\Gamma_{V, p} (\ell,f)$ and  $\Gamma_{\sW, p} (\ell,f)$ denote  the germs at $p$  of  $\Gamma_{V} (\ell,f)$  and of $\Gamma_{\sW} (\ell,f)$, respectively, cf. Definition \ref{d:polarlocus}. 

\begin{definition}[{Polar points of  $\Sing_{\sW}f$}]\label{d:pol}
 Let $\ell\in \Omega$, and let $V\in \sW$, $\dim V >0$. We set:
  $$\Pol_{V}(f) :=  \{ p\in \Sing_{\sW}f  \mid  \Gamma_{V, p} (\ell,f) \not= \emptyset \}$$ 
  and
  \[ \Pol(f) := \bigcup_{V\in \sW } \Pol_{V}(f).
  \]
 We call $\Pol_{V}(f)$ the set of \emph{$V$-polar points of  $f$ relative to $\ell$}. 
  \end{definition}

From the above definitions it follows that $\Pol_{V}(f)$ and $\Pol(f)$ are finite sets.  Let us also remark that all these sets depend on $\ell\in \Omega$.  The following statement shows in particular the finiteness of the 
sets of limit points in a given linear Morsification $f_t=f-t\ell$:

\begin{proposition}\label{t:localind}
Let $V \in \sW$, and let $W\subset \overline{V}\m V$ be a stratum of $\sS$ included in $\Sing_{\sW}f$. Then:
 \begin{equation}\label{eq_main}
 W\cap M_{V}(f) \subset  W\cap \Pol_{V}(f) \subset \Sing(\ell|_{W}).
\end{equation}
\end{proposition}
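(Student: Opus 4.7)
The plan is to treat the two inclusions in \eqref{eq_main} separately. The first is a tracking argument combining the Morse equation with the genericity of $\ell \in \Omega$; the second requires transferring the linear-dependence of $d\ell_{|V}$ and $df_{|V}$ along the polar curve to the tangent space $T_pW$, for which a Thom $(a_f)$-type regularity of $\sS$ will be the key tool.

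\emph{First inclusion.} I take $p \in W \cap M_V(f)$ together with Morse points $x_n$ of $(f_{t_n})_{|V}$ with $t_n \to 0$ and $x_n \to p$. The Morse equation $df_{|V}(x_n) = t_n\, d\ell_{|V}(x_n)$ places $x_n \in \Sing(\ell,f)_{|V}$. One must then exclude $x_n \in \Sing f_{|V}$ along a subsequence: if this held, $d\ell_{|V}(x_n) = 0$ would force $x_n \in \Sing f_{|V} \cap \Sing \ell_{|V}$, which is finite by the Morse genericity of $\ell$ on each stratum of $\sS$ contained in $\Sing f_{|V}$; such a sequence could not accumulate at $p \in W \subset \overline V \setminus V$. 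So eventually $x_n \in \Sing(\ell,f)_{|V} \setminus \Sing f_{|V} \subset \Gamma_V(\ell,f)$, and since $\Gamma_V(\ell,f)$ is closed in $\overline V$ by definition, $p \in \Gamma_V(\ell,f)$. Combined with $p \in \Sing_\sW f$, this is exactly $p \in \Pol_V(f)$.

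\emph{Second inclusion.} For $p \in W \cap \Pol_V(f)$, Theorem~\ref{t:bertini} tells us the polar locus is a reduced curve, so I pick a real analytic arc $\gamma : [0,\varepsilon) \to \overline V$ with $\gamma(0) = p$, $\gamma(s) \in \Gamma_V(\ell,f) \cap V$ and $df_{|V}(\gamma(s)) \neq 0$ for $s > 0$. Along $\gamma$, $d\ell_{|V}(\gamma(s)) = \lambda(s)\, df_{|V}(\gamma(s))$, so $\ker d\ell_{|V}(\gamma(s)) = \ker df_{|V}(\gamma(s))$ as codimension-$1$ hyperplanes in $T_{\gamma(s)}V$. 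Since $W \subset \Sing_\sW f$ is singular for $f$ inside its ambient $\sW$-stratum, $f$ is locally constant on $W$, so $df(p)|_{T_pW} = 0$. Invoking the Thom $(a_f)$-condition for the pair $(V,W)$ at $p$ (available after a further refinement of $\sS$, by the standard existence theorems for Thom stratifications of holomorphic maps), any subsequential limit of $\ker df_{|V}(\gamma(s_n))$ contains $T_pW$, and the same then holds for $\ker d\ell_{|V}(\gamma(s_n))$. Given any $v \in T_pW$, I choose $v_n \in \ker d\ell_{|V}(\gamma(s_n))$ with $v_n \to v$; then $d\ell(v_n) \equiv 0$, and the limit yields $d\ell|_W(p)(v) = 0$. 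Since $v$ was arbitrary in $T_pW$, $p \in \Sing(\ell|_W)$.

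\emph{Main obstacle.} The delicate step is the propagation in the second inclusion. Whitney-$(a)$ alone only provides $T_{\gamma(s_n)}V \to T \supset T_pW$, with no control on the position of the codimension-$1$ subspaces $\ker df_{|V}(\gamma(s_n))$ inside these limiting tangent spaces. In fact $\lambda(s_n)$ can a priori blow up as $s_n \to 0$, so trying to pass to the limit directly in $d\ell(v_n) = \lambda(s_n)\, df(v_n)$ produces an indeterminate $\infty \cdot 0$. Thom $(a_f)$-regularity is exactly what rules out the troublesome ``escaping'' direction and lets one pick the approximants $v_n$ inside $\ker df_{|V}(\gamma(s_n))$, bypassing the indeterminacy outright. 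Verifying that this regularity can be arranged in the polar-curve framework (by refining $\sS$) is the technical input the argument really hinges on.
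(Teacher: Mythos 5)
Your proof follows essentially the same route as the paper's: the first inclusion comes from the fact that Morse trajectories lie in the polar curve (your extra step ruling out accumulation of Morse points inside $\Sing f_{|V}\cap\Sing \ell_{|V}$ is a correct detail the paper glosses over), and the second inclusion rests on Thom $(a_f)$-regularity of the pair $(V,W)$, which the paper states contrapositively in conormal language but which is the same mechanism as your tangent-space limit argument.

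The one point you must correct is the parenthetical ``available after a further refinement of $\sS$''. Refining $\sS$ is not harmless here: the conclusion concerns $\Sing(\ell|_{W})$ for the \emph{given} stratum $W$ of $\sS$, and if $p$ happens to lie in a lower-dimensional stratum $W'\subsetneq W$ of the refinement, then Thom $(a_f)$ for the pair $(V,W')$ only forces limits of $\ker df_{|V}$ to contain $T_pW'$, which no longer yields $d\ell|_{T_pW}=0$. Fortunately no refinement is needed: by Brian\c{c}on--Maisonobe--Merle \cite{BMM} (see also \cite{Ti-compo}, \cite[Theorem A1.1.7]{Ti-book}), a Whitney stratification in which $\Sing_{\sW}f$ is a union of strata is automatically Thom $(a_f)$-regular along those strata, and this is exactly what the paper invokes. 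With that substitution your argument is complete.
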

\begin{proof} 
The Morse points of the restriction of $f_{t} = f-t\ell$ to some stratum $V$ satisfy the equations of the polar curve $\Gamma_{V} (\ell,f)$. More precisely, if there is a trajectory of a stratified Morse point $q(t)\in V$ which abuts to some point $p\in  W\subset \overline{V}\setminus V$, then this trajectory, as a set-germ at $p$, is included in the polar curve $\Gamma_{V, p} (\ell,f)$, and therefore  $p\in M_{V}(f)$.
This implies the first inclusion $W\cap M_{V}(f)\subset W\cap \Pol_{V}(f)$.

The second inclusion $W \cap \Pol_{V}(f) \subset \Sing(\ell|_{W})$ has been shown in the proof of \cite[Proposition 4.2]{MT1}. Here is an outline. Let $W\subset \Sing_{\sW}f$,  be a stratum of $\sS$ such that  $W\subset \overline{V}\m V$. 
Let   $T^*_{\overline{V}} \bC^N$ be the conormal space of $\overline{V}$ in the cotangent bundle $T^*\bC^N$,   i.e., the closure of the conormal bundle $T^*_V \bC^N$ in  $T^*\bC^N$, and let $\pi : T^*_{\overline{V}} \bC^N \to \overline{V}$ denote the projection. 
  If $p\not\in \Sing (\ell_{|W})$ then  $\d \ell \not\in \pi^{-1}(p)$. On the other hand, it is well-known that the Whitney stratification $\sS$ is also Thom (a$_{f}$)-regular at all the strata of the singular locus $\Sing_{\sW}f$, cf  \cite{BMM}, see also \cite{Ti-compo} or \cite[Theorem A1.1.7]{Ti-book}. This means that, for the pair of strata $(W,V)$ as above,  $\d f_{|V}$ is independent of $\d \ell_{|V}$  in the neighbourhood of $p\in W$.
In turn, this implies, by definition,  that the polar locus $\Gamma_{V} (\ell,f) $ is empty in the neighbourhood of $p$. Since this holds for any stratum $V$ as above, our statement is proved.
\end{proof}

%

 
\subsection{Detecting the Morse singularities of $f_{t}$}\

In order to compute the numbers $m_{p,V}(f) $, we need to identify conveniently the trajectories of Morse points stratwise, and here is our key result:
\begin{lemma}\label{t:polardiscrim-new}
 Let $V\in \sW$ be a positive dimensional stratum of $X$, such that $V\not\subset \Sing_{\sW}f$.
 The point $q\in V$  is a Morse singularity  of $f_{t}$ on $V$  if and only if 
    $q\in V\cap \Gamma_{V}(\ell,f)$ such that   $\mult_{q}\Bigl(\Gamma_{V}(\ell,f), f_{t|V}^{-1}\bigl(f_{t}(q)\bigr)\Bigr)=2$.
\end{lemma}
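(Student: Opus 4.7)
The strategy is to translate both sides of the biconditional into statements about the Hessian $H\tilde f_t(q)$ of $\tilde f_t := f_t|_V$ along the tangent direction to the polar curve at $q$, and then pass between directional and full non-degeneracy using the genericity of $\ell\in\Omega$ (Definition~\ref{d:linmorsif}). The key preliminary observation is that $\Gamma_V(\ell, f)\setminus \Sing f|_V$ parametrizes the critical loci of the one-parameter family $\{f_\tau|_V\}_{\tau\in\bC}$: for each $q'$ in this set, the $\bC$-linear dependence $df|_V(q') = \tau(q')\cdot d\ell|_V(q')$ determines a unique scalar $\tau(q')\in\bC$, making $q'$ a critical point of $f_{\tau(q')}|_V$. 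In particular, $q$ is a critical point of the fixed $f_t|_V$ iff $q\in V\cap \Gamma_V(\ell, f)$ and $\tau(q)=t$.

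For the forward direction, if $q$ is a Morse singularity of $f_t|_V$, then $d\tilde f_t(q)=0$ forces $q\in\Sing(\ell, f)|_V$; the genericity of $\ell$ (the finite set $\Sing\ell|_V$ is disjoint from the proper subvariety $\Sing f|_V \subsetneq V$) rules out $q\in\Sing f|_V$, placing $q\in V\cap\Gamma_V(\ell,f)$. Non-degeneracy of the Hessian forces the linearized polar equations to cut out a one-dimensional tangent space, so a smooth branch of $\Gamma_V(\ell,f)$ through $q$ admits a parametrization $\gamma(s)$ with $\gamma(0)=q$. Since $d\tilde f_t(q)=0$, the Taylor expansion $f_t(\gamma(s)) - f_t(q) = \tfrac{1}{2}\,(H\tilde f_t)(q)\bigl(\gamma'(0),\gamma'(0)\bigr)\, s^{2} + O(s^{3})$ reduces $\mult_q = 2$ to the non-vanishing of its quadratic coefficient. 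Differentiating the identity $df|_V(\gamma(s)) = \tau(s)\, d\ell|_V(\gamma(s))$ at $s=0$ and pairing with $\gamma'(0)$ yields
\[
(H\tilde f_t)(q)\bigl(\gamma'(0),\gamma'(0)\bigr) \;=\; \tau'(0)\cdot d\ell|_V(q)\bigl(\gamma'(0)\bigr),
\]
and genericity of $\ell$ (supplying $\tau'(0)\neq 0$ and $d\ell|_V(q)(\gamma'(0))\neq 0$), together with the Morse condition, makes both factors nonzero, hence $\mult_q = 2$.

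For the converse, assume $q\in V\cap \Gamma_V(\ell,f)$ with $\mult_q = 2$, and parametrize a local smooth branch by $\gamma(s)$. From $(f_t\circ\gamma)'(0) = (\tau(q) - t)\cdot d\ell|_V(q)(\gamma'(0))$, the inequality $\mult_q \ge 2$ together with the generic non-vanishing $d\ell|_V(q)(\gamma'(0))\neq 0$ forces $\tau(q)=t$, so $q$ is a critical point of $f_t|_V$. The equality $\mult_q=2$ then yields $(H\tilde f_t)(q)(\gamma'(0),\gamma'(0))\neq 0$. To upgrade this directional non-degeneracy to full non-degeneracy of $H\tilde f_t(q)$, suppose for contradiction that some nonzero $v\in \ker H\tilde f_t(q)$ exists: in local coordinates with $\ell|_V = y_1$, the linearized polar equations at $q$ read $(H\tilde f_t(q)\cdot v)_j = 0$ for $j\ge 2$, automatically satisfied by any $v\in\ker H\tilde f_t(q)$, so $v$ is tangent to $\Gamma_V(\ell,f)$ at $q$. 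Since this tangent is one-dimensional, $v\propto \gamma'(0)$, forcing $(H\tilde f_t)(q)(\gamma'(0),\gamma'(0))=0$ and contradicting $\mult_q = 2$. Hence $H\tilde f_t(q)$ is non-degenerate and $q$ is Morse for $f_t|_V$.

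The main obstacle I anticipate is the careful extraction of the genericity hypotheses from $\ell\in\Omega$: specifically, ensuring that $\tau'(0)\neq 0$ and $d\ell|_V(q)(\gamma'(0))\neq 0$ (so that neither the critical-parameter function $\tau$ nor the restriction of $\ell$ is stationary along the polar curve at $q$), and that the polar branch through $q$ is smooth with one-dimensional tangent space. Each of these should follow from Definition~\ref{d:linmorsif} and Theorem~\ref{t:bertini}, but requires a transversality argument separate from the main computation.
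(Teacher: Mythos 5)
Your proof is correct in its architecture but follows a genuinely different route from the paper's. The paper does not compute with the Hessian at all: it invokes L\^e's polar formula for the Milnor number, $\mu(f_{t|V},q)=\mult_{q}\bigl(\Gamma_{V}(\ell,f), f_{t|V}^{-1}(f_{t}(q))\bigr)-\mult_{q}\bigl(\Gamma_{V}(\ell,f), \ell_{|V}^{-1}(\ell(q))\bigr)$, observes that the second term equals $1$ because $\ell_{|V}$ is transversal to the polar curve at $q$, and then reads off the statement from $\mu=1$ (Morse) versus $\mu=0$ (regular). Your coordinate computation along the branch $\gamma$ amounts to a self-contained verification of exactly this formula in the two relevant cases, which is arguably more transparent but reproves a cited result. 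Two remarks on the genericity inputs you flag at the end. First, $\tau'(0)\neq 0$ needs no extra genericity: in your coordinates the differentiated polar identity reads $H\tilde f_t(q)\cdot\gamma'(0)=\tau'(0)\,e_1$, and since $H\tilde f_t(q)$ is invertible and $\gamma'(0)\neq 0$, the right-hand side cannot vanish. Second, the condition $d\ell_{|V}(q)(\gamma'(0))\neq 0$ is a genuine additional hypothesis --- it is equivalent to $(H\tilde f_t(q)^{-1})_{11}\neq 0$, i.e.\ to non-degeneracy of the Hessian restricted to $\ker d\ell_{|V}(q)$, which does \emph{not} follow from non-degeneracy of the full Hessian (complex quadratic forms have isotropic vectors). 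This is precisely the transversality $\mult_{q}\bigl(\Gamma_{V}(\ell,f),\ell_{|V}^{-1}(\ell(q))\bigr)=1$ that the paper also asserts without proof; it can be secured by noting that $\Gamma_{V}(\ell,f)$ is independent of $t$, that the critical points of $\ell$ restricted to this reduced curve (together with the singular points of the curve, which you also implicitly exclude when you take a single smooth branch in the converse direction) form a finite set, and that the Morse points of $f_t$, which move along the curve as $t$ varies, avoid this finite set for the general values of $t$ allowed by Theorem \ref{t:lefschetz}. With that one supplementary observation your argument is complete.
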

\begin{proof} 
 By Proposition \ref{t:localind},  a trajectory of  Morse points $q=q(t)\in V$  inside $\Gamma_{V}(\ell,f)$ abuts, as $t\to 0$,  to some point $p\in M_{V}(f) \subset  \Pol_{V}(f)$. 
 This point $p$ may be a singular point of the curve $\Gamma_{V}(\ell,f)$, however the points $q(t)\in \Gamma_{V}(\ell,f)$ are nonsingular, for all $t\not= 0$ close enough to 0. 
   
   As $V$ is nonsingular, we may apply the classical study to the 
  local singular fibration at $q$ defined by  the map $(\ell,f_{t})_{|V}$, see e.g. \cite{Ti-israel} and the paper \cite{Le} where  L\^{e} D.T introduced this object. Let us point out that $\ell$ is general  enough with respect to $f_{t}$ at $q$, in the sense that the pair $(\ell,f_{t})_{|V}$ defines an isolated complete intersection singularity  at $q$.  
  The function $f_{t|V}$ has a Morse singularity at $q$, thus its Milnor number is 1. The restriction $\ell_{|V}$ is non-singular  at $q$ and transversal to the polar curve $\Gamma_{V}(\ell,f)$, which implies the equality $\mult_{q}\bigl(\Gamma_{V}(\ell,f), \ell_{|V}^{-1}(\ell(q)\bigr)  =1$.  
The polar formula for the Milnor number \cite{Le}, see also \cite{Ti-bouquet}, \cite{Ti-israel}, takes here the form:
    \[  1 = \mult_{q}\bigl(\Gamma_{V}(\ell,f_{t}), f_{t |V}^{-1}(f(q)\bigr) - \mult_{p}\bigl(\Gamma_{V}(\ell,f_{t}), \ell_{|V}^{-1}(\ell(q)\bigr),
    \]
and we may replace $\Gamma_{V}(\ell,f_{t})$ by  $\Gamma_{V}(\ell,f)$ since they are equal sets.
 It then follows  that $\mult_{q}\bigl(\Gamma_{V}(\ell,f), f_{t|V}^{-1}(f(q)\bigr)= 2$, which proves our claim in one direction. 
 
Conversely, if $f_{t|V}$ has no singularity at $q=q(t)\in \Gamma_{V}(\ell,f)$, since its Milnor number at $q$ is 0,  we obtain, like we explained above, the equality:
  \[  0 = \mult_{q}\bigl(\Gamma_{V}(\ell,f), f_{t |V}^{-1}(f(q)\bigr) - \mult_{q}\bigl(\Gamma_{V}(\ell,f), \ell_{|V}^{-1}(\ell(q)\bigr),
    \]
    which implies  $\mult_{q}\bigl(\Gamma_{V}(\ell,f), f_{t|V}^{-1}(f(q)\bigr)=1$. This completes our proof.
\end{proof}

\subsection{Morse trajectories at infinity and their limit points}\label{s:isolregul}\

\begin{definition}\label{d:infinity}
 Let $\overline{X}\subset \bP^{N}$ denote the projective closure of the affine variety $X\subset \bC^N$, and let $\tilde f$ denote the homogenization of $f$ of degree $d = \deg f$ with the new variable $x_{N+1}$.
The following hypersurface    
 \[ \bX := \cl  \{ [x_{1}; \cdots ;x_{N};x_{N+1}]\in X , [\lambda; \tau] \in \bP^{1} \mid \tau  \tilde f(x) - \lambda x_{N+1}^{d} =0\}\subset 
 \overline{X}\times \bP^{1} \subset \bP^{N} \times \bP^{1}
 \]
 is a completion of the graph of $f$, and the projection $\lambda : \bX \to \bC = \bP^{1}\setminus \{[0;1]\}$ is a proper map which extends our polynomial map $f\colon X\to \bC$.
 Let us consider the embedding $X\hookrightarrow \bX$, $\mathbf x \mapsto ([\mathbf x; 1], f(\mathbf x))$.
Its  image identifies with $\bX \setminus \bX^{\infty}$ where $\bX^{\infty} :=  \bX \cap (H^{\infty}\times\bP^{1})$, and   $H^{\infty}$ denotes the hyperplane at infinity  $\bP^{N}\setminus \bC^{N}$.
 \end{definition}
 Let $(p,\alpha)\in \bX^{\infty}$, where $\alpha \in \bP^{1}$. The particular point $\alpha = [1;0]$ will be viewed as the infinite limit $[1;0] := \lim_{\lVert \bx \rVert \to \infty}f(\bx)$.

 Let  then $\overline{\Gamma_{\sW}(\ell,f)}$ denote the closure of the polar curve in $\bX$.  The following result produces a convenient superset of the set of all limit points  at infinity of the Morse trajectories:
 
 \begin{lemma}\label{l:main2}
Let $f \colon X\to \bC$ be a non-constant polynomial map, and let $\ell \in \Omega$.
A Morse point $q(t)$ of $f_{t}$ which escapes to infinity as $t \to 0$ has a trajectory included in a polar branch of $\overline{\Gamma_{\sW}(\ell,f)}$ and intersects $\bX^{\infty}$  at a certain point of the finite set:
  $$\Pol^{\infty}(f) := \{ (p,\alpha)\in \bX^{\infty} \mid  (p,\alpha)\in  \overline{\Gamma_{\sW}(\ell,f)} \cap \bX^{\infty}\}\subset \bP^{N}\times \bP^{1}.$$  
 \end{lemma}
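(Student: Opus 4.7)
The plan is to apply Lemma \ref{t:polardiscrim-new} stratum by stratum in order to confine every Morse trajectory to the polar curve $\Gamma_\sW(\ell,f)$, and then to pass to the compactification $\bX$ of Definition \ref{d:infinity} in order to read off the (necessarily finite) set of possible limits at infinity.

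First I would rule out strata $V$ with $V\subset \Sing_\sW f$. On such a stratum $df|_V \equiv 0$, so $f|_V$ is locally constant, and any stratified critical point of $f_t|_V = f|_V - t\ell|_V$ is simply a critical point of $\ell|_V$; these are independent of $t$ and form a fixed bounded set, so no trajectory lying entirely on such a stratum can escape to infinity. An escaping Morse point $q(t)$ must therefore live on some stratum $V$ with $V\not\subset \Sing_\sW f$, and for such $V$ Lemma \ref{t:polardiscrim-new} places $q(t)\in V\cap \Gamma_V(\ell,f)\subset \Gamma_\sW(\ell,f)$.

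Next, I would treat $t\mapsto q(t)$ as a complex-analytic arc on a small punctured disc around $0$: this follows from the implicit function theorem applied to the non-degenerate critical-point equation of $f_t|_V$. Since this arc lies set-theoretically inside the reduced curve $\Gamma_\sW(\ell,f)$ granted by Theorem \ref{t:bertini}, it parametrizes a single irreducible branch of $\Gamma_\sW(\ell,f)$. Embedding $X\hookrightarrow \bX$ as in Definition \ref{d:infinity} and passing to projective closure, the arc extends to a holomorphic arc in the projective curve $\overline{\Gamma_\sW(\ell,f)}\subset \bX$. When $q(t)$ escapes to infinity in $X$ as $t\to 0$, the compactness of $\bX$ forces the arc to accumulate on $\bX^\infty$, hence at a point of $\overline{\Gamma_\sW(\ell,f)}\cap \bX^\infty$, which is exactly $\Pol^\infty(f)$ by definition.

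Finally, the finiteness of $\Pol^\infty(f)$ is a dimension count: $\overline{\Gamma_\sW(\ell,f)}$ is a projective curve in $\bX$ no irreducible component of which lies inside the divisor $\bX^\infty$ (because $\Gamma_\sW(\ell,f)$ is by construction an affine subset of $X$), so its intersection with $\bX^\infty$ is $0$-dimensional and therefore finite. The main delicate point I anticipate is justifying that the Puiseux extension of the trajectory across $t=0$ indeed closes up on $\bX^\infty$ along the chosen branch rather than running off in a less controlled way; I would handle this using the properness of the projection $\lambda\colon \bX\to \bP^1$ from Definition \ref{d:infinity} together with a Puiseux parametrization of the branch of $\overline{\Gamma_\sW(\ell,f)}$ that contains the trajectory.
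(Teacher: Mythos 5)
Your proposal is correct and follows essentially the same route as the paper's (very brief) proof: invoke Lemma \ref{t:polardiscrim-new} to confine every Morse trajectory to the polar curve $\Gamma_{\sW}(\ell,f)$, then use the finiteness of the branches of its closure in $\bX$ to conclude that $\overline{\Gamma_{\sW}(\ell,f)}\cap\bX^{\infty}=\Pol^{\infty}(f)$ is finite. Your additional details --- disposing of strata contained in $\Sing_{\sW}f$, the analyticity of $t\mapsto q(t)$, and the compactness argument locating the limit on $\bX^{\infty}$ --- are all sound and merely make explicit what the paper leaves implicit.
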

\begin{proof}
As shown by Lemma \ref{t:polardiscrim-new}, any Morse point of $f_{t}$ is contained in the polar locus $\Gamma_{\sW}(\ell,f)$, so this holds for those which escape to  infinity too.   The finiteness of branches of the polar curve implies the finiteness of the set $\Pol^{\infty}(f)$.
\end{proof}
The above Lemma \ref{l:main2} shows in particular that there is a finite set $M^\infty$ of points at infinity $(p,\alpha)\in \bX^{\infty}$, that we call ``attractors'', to which the stratified Morse points of $f_{t}$ abut when $t\to 0$, and that $\Pol^{\infty}(f)$ is an effectively  computable finite set containing those. Let us point out that these points depend on the choice of $\ell\in \Omega$, as well as the affine attractors do.

In the next section we will show how to compute the Morse index of each point of $\Pol^{\infty}(f)$.
Let us  set first the notation:
   \[ m_{(p,\alpha), V} := \# \biggl\{ \begin{array}{ll} \mbox{ Morse points of } f_{t} \mbox{  on the stratum }  V \in \sW  \mbox{ which }\\ \mbox{ converge to the point }  (p,\alpha) \in \Pol^{\infty}(f) \mbox{  as }  t \to 0.   \end{array} \biggr\}.
   \]

 \section{Morse indices at attractors}\label{s:infinity}

We have seen in Section \ref{s:detect} that there are well defined ``attractive poles'',  namely a finite set of points either on $X$ or at infinity to which all the Morse singularities converge when $t \to 0$. We will find a formula for the number of Morse singularities which converge at each attractive pole.  
 
 As before, we consider a non-constant polynomial function $f\colon X\to \bC$ on an irreducible affine variety $X\subset \bC^{N}$ of dimension $n+1\ge 2$,  endowed with the canonical (coarsest) Whitney stratification $\sW$, and general linear Morsification $f_{t} = f - t \ell$, for $\ell \in \Omega$, cf. Definition \ref{d:linmorsif}.

\subsection{Localization at infinity}
We first focus on the case of a point $p$ at infinity, and consider a local chart at $p$. 

We have denoted by $\tilde f$ the homogenization of $f$ of degree $d = \deg f$ with the new variable $x_{N+1}$, and by $H^{\ity}\subset \bP^{N}$ the hyperplane at infinity. Let $\tilde \ell$ denote the linear function $\ell$  viewed as a function on $\bC^{N+1}$, of an extra mute variable $x_{N+1}$. 

 Without loss of generality (after changing the numbering of the coordinates), let  $(p,\alpha)\in \Pol^{\infty}(f)$ such that $p\in \overline{X} \cap \bP^{N}$ is in the chart $x_{1}\not= 0$.
Let then $\overline{f}$ denote the de-homogenization of $\tilde f$
in variable $x_{1}$, i.e., $\overline{f}(x_{2}, \ldots, x_{N+1}) := \tilde f(1, x_{2}, \ldots, x_{N+1}).$
 Similarly, let $\overline{\ell}$ be the de-homogenization of $\tilde \ell$
in  variable $x_{1}$, i.e., $\overline{\ell} (x_{2}, \ldots, x_{N+1}) := \tilde \ell(1, x_{2}, \ldots, x_{N+1}).$
Let $\overline{\Gamma}_{V,p}$ denote the germ of the polar curve\footnote{In this section we consider the closure in $\bP^N$ of the polar curve, denoted by $\overline{\Gamma_{V}(\ell,f)}\subset \bP^{N}$.} $\overline{\Gamma_{V}(\ell,f)}\subset \bP^{N}$ at the same point $p$.   

Note that the local intersection multiplicity between a hypersurface and a curve is actually computed 
as a sum of intersection multiplicities over each irreducible components of the curve. When we deal with meromorphic functions at $p$,  it will turn out that we need to take into account only the positive terms in this sum (cf. Lemma \ref{l:computeord} below).

Let then $\delta_{\ge 0}$ denote the ``positive selection function"
defined as $\delta_{\ge 0}[k] := \frac12(k+ |k|)$,  i.e., $\delta_{\ge 0}[k] := k$ for $k \geq 0$, and $\delta_{\ge 0}[k] := 0$ for $k \leq 0$.
We will use it to select only the  \emph{positive terms} in a sum.


With these notations, we prove: 

\begin{theorem}\label{t:main2}
Let $f\colon X\to \bC$ be a non-constant polynomial, and let $\ell \in \Omega$. Let $V\in \sW$ be some positive dimensional stratum  such that $f_{|V}$ is not constant.
Then, for any $(p,\alpha)\in (\overline{V}\times \bP^1) \cap \Pol^{\infty}(f)$, we have:
 \begin{equation}\label{eq:main4}
m_{(p,\alpha), V} = 
 \sum_{\overline{\gamma}_{p} \subset \overline{\Gamma_{V}(\ell,f)} \atop \overline{\gamma}_{p} \ni (p,\alpha)} 
\delta_{\ge 0}\bigl[\mult_{0} \bigl( \{\overline{f} - \alpha x_{N+1}^{d} =0\}, \overline{\gamma}_{p}\bigr) -   (d-1)\mult_{p} (H^{\ity}, \overline{\gamma}_{p}) \bigr],
\end{equation}
  where the term $\alpha x_{N+1}^{d}$ occurs only when $\alpha$ is a finite value.
\end{theorem}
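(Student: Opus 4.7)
The plan is to reduce the count $m_{(p,\alpha),V}$ to a branch-by-branch calculation and then analyze each polar branch via Puiseux parametrization and a Newton-polygon dominant-balance argument. By Lemmas \ref{t:polardiscrim-new} and \ref{l:main2}, every Morse point of $f_t|_V$ that escapes to the attractor $(p,\alpha)\in \bX^{\ity}$ lies on some branch $\overline{\gamma}_p \subset \overline{\Gamma_V(\ell,f)}$ passing through $(p,\alpha)$; so $m_{(p,\alpha),V}$ splits as a sum over such branches, and the goal reduces to showing that each branch contributes $\delta_{\geq 0}\bigl[\mult_p(\{\overline{f}=0\},\overline{\gamma}_p) - (d-1)\mult_p(H^{\ity},\overline{\gamma}_p)\bigr]$ Morse trajectories.

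Fix such a branch and work in the affine chart $x_1 \neq 0$ of $\bP^N$ containing $p$. A direct computation from the homogeneity of $\tilde f$ shows that in this chart $f = \overline{f}/x_{N+1}^d$ and $\ell = \overline{\ell}/x_{N+1}$ as meromorphic functions. Choose a Puiseux uniformizer $s$ on $\overline{\gamma}_p$ with $s=0 \leftrightarrow (p,\alpha)$, and let $F(s), L(s), h(s)$ denote the power-series pullbacks of $\overline{f}, \overline{\ell}, x_{N+1}$ along the branch. The restriction of the Morsification then takes the form
\[ \phi(s,t) := f_t|_{\overline{\gamma}_p} = \frac{F(s)}{h(s)^d} - t\, \frac{L(s)}{h(s)}. \]
By Lemma \ref{t:polardiscrim-new} applied at a smooth point of the branch, the Morse points of $f_t|_V$ lying on $\overline{\gamma}_p$ correspond to simple zeros $s(t)\neq 0$ of $\partial_s \phi = A(s) - tB(s)$, with $A := \partial_s(F/h^d)$ and $B := \partial_s(L/h)$; the branch's contribution is the number of such simple zeros converging to $s=0$ as $t\to 0$.

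Setting $\alpha_F := \ord_s F = \mult_p(\{\overline{f}=0\},\overline{\gamma}_p)$ and $\alpha_h := \ord_s h = \mult_p(H^{\ity},\overline{\gamma}_p)$, a Puiseux expansion yields $\ord_s A = \alpha_F - d\alpha_h - 1 =: a$. Since $\ell \in \Omega$ is general, the linear form $\overline{\ell}$ does not vanish at any of the finitely many points of $\Pol^{\ity}(f)\cap \bP^N$, so $L(0)\neq 0$ and $\ord_s B = -\alpha_h - 1 =: b$, giving $a - b = \alpha_F - (d-1)\alpha_h$. A dominant-balance analysis of $A(s) = tB(s)$, with $A(s) = A_0 s^a + \hot$ and $B(s) = B_0 s^b + \hot$ and $A_0, B_0 \neq 0$, then shows that if $a > b$, the relation $s^{a-b} \sim tB_0/A_0$ produces exactly $a - b$ simple roots $s(t) \to 0$, while if $a \leq b$, no nonzero root approaches $0$. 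Summing $\delta_{\geq 0}[a - b]$ over branches $\overline{\gamma}_p \ni (p,\alpha)$ yields formula \eqref{eq:main4}.

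The main obstacle will be to rigorously propagate genericity from $\ell \in \Omega$ through the Puiseux analysis: in particular, to ensure the nondegeneracy of the leading coefficients $A_0, B_0$, to confirm that the $a - b$ simple zeros of $\partial_s \phi$ produced by the balance are genuine Morse critical points of $f_t|_V$ (rather than merely critical points of the restriction to the branch, which could be spurious at multi-branch intersections of the polar curve), and to treat Puiseux parametrizations of polar branches singular at $p$, where the stated intersection multiplicities still coincide with the relevant orders of vanishing along the normalization.
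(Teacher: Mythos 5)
Your proposal is correct and follows essentially the same route as the paper: both decompose $m_{(p,\alpha),V}$ over the polar branches through $(p,\alpha)$, restrict $f_t$ to a Puiseux parametrization of each branch, and count the zeros of $\partial_s(f|_{\gamma}) - t\,\partial_s(\ell|_{\gamma})$ converging to $0$ via the difference of orders (your dominant-balance step is exactly the paper's Lemma \ref{l:computeord}), using $\mult_p(\{\overline{\ell}=0\},\overline{\gamma}_p)=0$ by genericity of $\ell$. The only cosmetic difference is that the paper first performs this computation in affine coordinates (Proposition \ref{p:main2}) and then converts orders into intersection multiplicities in the chart at infinity, whereas you work directly in that chart; your closing caveats are already handled by Lemma \ref{t:polardiscrim-new} (which certifies that the tangency points on the polar curve are genuine Morse points of $f_t|_V$) and by the standard identification of intersection multiplicity with order of vanishing along the normalization.
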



We will see in the proof that the formula \eqref{eq:main4} does not depend on the local chart of $\bP^{N}$.
Our formula \eqref{eq:main4} for $m_{(p,\alpha), V}$ also determines
 the sets of attractors:
\[ M^{\infty}_{V}(f) := \bigl\{ (p,\alpha)\in \Pol^{\infty}(f) \mid  m_{(p,\alpha), V} >0 \bigr\} \ \mbox{ and } M^{\infty} := \bigcup_{V\in \sW} M^{\infty}_{V}(f).
\]

\

One also notes that if  $V$ is  a stratum such that $f_{|V} =$ constant, then the 
Morse points of $f_{t}$ on $V$ are precisely the tangency points between $V$ and the levels of $\ell$;
in particular they do not depend of $t\not=0$.




%

\subsection{Proof of Theorem \ref{t:main2}}\

We fix some $\ell\in \Omega$ such that the polynomial $f_{t}(x) = f(x) - t \ell(x)$ is a Morsification of $f$
for all $t$ close enough to $0\in \bC$.

\subsubsection{Polar arcs at infinity}
 By Lemma \ref{t:polardiscrim-new}, the Morse singularities of the function $f_{t}$  occur precisely at the nonsingular points of the polar curve $\Gamma_{\sW}(\ell,f)$ at which the
 fibers of $f_{t}$ are tangent.  
We want to count those Morse singularities  which converge to the  point $(p,\alpha)\in \Pol^{\infty}(f)$ as $t \to 0$.  Let us consider \emph{polar arcs at infinity}, i.e., parametrizations of the branches $\gamma$ of the affine polar curve $\Gamma_{\sW}(\ell,f)$  such that
 $(p,\alpha)\in \overline{\gamma}\cap \bX^{\infty}$. More precisely,  there exists a parametrization of $\gamma$:
 $$\mathbf x_{\gamma}(s) := \left( x_{1}(s), \ldots , x_{N}(s)  \right) $$
  for $s$ in a small enough disk $D\subset \bC$ centered at $0\in \bC$, such that $\lim_{s\to 0}\mathbf x_{\gamma}(s) = p\in \overline{\gamma}\cap H^{\infty}\subset \bP^{N}$, and with  $\lim_{s\to 0}f(\mathbf x_{\gamma}(s)) = \alpha$. 
Each $x_{i}(s)$ is a Laurent series of finite order  in the variable $s$. In case the
 projective hypersurface $\{ \ell =0\}\subset \bP^{N-1}$ does not contain the point $p\in H^{\infty}$, we have:
\begin{equation}\label{eq:order}
\lim_{s\to 0} \lVert \ell(\mathbf x_{\gamma}(s)) \rVert  = \infty.
\end{equation}

For any point at infinity $(p,\alpha)\in \Pol^{\infty}(f)$ and some polar branch $\gamma \subset \Gamma_{V}(\ell,f)$ with $(p,\alpha)\in \overline{\gamma}$,   let us set:
 $$m_{\gamma}(p,\alpha) := \# \left\{ \begin{array}{l} \mbox{points of  tangency of } \gamma \mbox{ with  the fibres of }  f_{t} \\ \mbox{ which converge to } (p,\alpha) \mbox{ as } t\to 0 \end{array} \right\},$$
 
Using the identification provided by Lemma \ref{t:polardiscrim-new},   we get the following sum decomposition of the Morse index:
 $$ m_{(p,\alpha), V} = \sum_{(p,\alpha)\in \overline{\gamma} \subset \overline{\Gamma_{V}(\ell,f)} } m_{\gamma}(p,\alpha).$$

 By convention,  $\ord_{0} (f_{\alpha})_{\mid \gamma}$ will denote the order at $0$ of the Laurent series $f _{\mid \gamma}(s) - \lim_{s\to 0}f _{\mid \gamma}(s)$ in case the limit is finite, and will denote $\ord_{0} (f)_{\mid \gamma}$ in case the limit $\lim_{s\to 0}f _{\mid \gamma}(s)$ is infinite.
 \begin{proposition}\label{p:main2}
 If $f_{|V}\not=$ constant, then
 the number $m_{\gamma}(p,\alpha)$ is equal to:
\begin{equation}\label{eq:main3}
\ord_{0} (f_{\alpha} )_{\mid \gamma} -  \ord_{0}\ell_{\mid \gamma}
\end{equation}
whenever this expression is $\ge 0$, and $m_{\gamma}(p,\alpha)= 0$ whenever the expression \eqref{eq:main3} is negative.
\end{proposition}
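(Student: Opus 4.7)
My approach is to reduce the counting of Morse points on the fixed polar arc $\gamma$ to a one-variable equation in the Puiseux parameter $s$, and then extract the count from a Laurent-series order computation. Set $u(s) := (f \circ \mathbf{x}_{\gamma})(s)$ and $v(s) := (\ell \circ \mathbf{x}_{\gamma})(s)$. By Lemma \ref{t:polardiscrim-new}, a Morse point of $f_{t}$ lying on the smooth part of $\gamma$ is precisely a point where the fibre of $f_{t|V}$ is tangent to $\gamma$, i.e., where $(u - t v)'(s) = 0$. Since $\ell \in \Omega$ is generic and $\gamma$ lies in the polar locus, $df_{|V}$ and $d\ell_{|V}$ are proportional along $\gamma$, which in particular shows that the meromorphic function
\[
\lambda(s) \ := \ \frac{u'(s)}{v'(s)}
\]
is well defined. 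The Morse condition then rewrites as $\lambda(s) = t$, so $m_{\gamma}(p,\alpha)$ equals the number of solution branches $s = s(t)$ of this equation with $s(t) \to 0$ as $t \to 0$.

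Next I would compute $N := \ord_{0} \lambda$. By the genericity of $\ell$, the point $p$ does not lie on the projective hyperplane $\{\ell = 0\}$, so $|v(s)| \to \infty$ and $\ord_{0} v = \ord_{0}\ell_{|\gamma} < 0$, as in \eqref{eq:order}. I would treat the two cases for $\alpha$ uniformly: when $\alpha \in \bC$ is finite, $\ord_{0}(u - \alpha) = \ord_{0}(f_{\alpha})_{|\gamma} > 0$, while for $\alpha = \infty$, $\ord_{0} u = \ord_{0}(f_{\alpha})_{|\gamma} < 0$. In either case the relevant Laurent series has nonzero order, so term-by-term differentiation gives $\ord_{0} u' = \ord_{0}(f_{\alpha})_{|\gamma} - 1$ and $\ord_{0} v' = \ord_{0}\ell_{|\gamma} - 1$; the $-1$'s cancel upon taking the quotient and
\[
N \ = \ \ord_{0}(f_{\alpha})_{|\gamma} - \ord_{0}\ell_{|\gamma}.
\]

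Finally I would count the solutions of $\lambda(s) = t$ near $s = 0$. Writing $\lambda(s) = c\, s^{N}(1 + \hot)$ with $c \neq 0$, an elementary Newton polygon / implicit function argument shows that when $N > 0$ the equation admits exactly $N$ simple Puiseux branches $s_{j}(t) \sim (t/c)^{1/N}$, all tending to $0$; when $N \leq 0$, $\lambda$ is either bounded away from $0$ or blows up at $s = 0$, so no branch with $s(t)\to 0$ exists for $|t|$ small. The genericity of $\ell$ combined with Lemma \ref{t:polardiscrim-new} ensures that each simple solution $s_{j}(t)$ corresponds to an honest Morse critical point of $f_{t|V}$, yielding $m_{\gamma}(p,\alpha) = \max(N,0)$, which is exactly formula \eqref{eq:main3}. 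The main technical care is in the order-of-derivative step, where one must exclude $\ord_{0}(u - \alpha) = 0$ and $\ord_{0} v = 0$; both are prevented by the definition of $\alpha$ as the limit of $u$ and by the generic position of $\ell$ at $p$.
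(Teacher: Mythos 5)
Your proposal is correct and follows essentially the same route as the paper: both reduce the count to the one-variable tangency equation $u'(s)-t\,v'(s)=0$ along the parametrized polar arc, shift the orders by $-1$ under differentiation (using the convention defining $\ord_{0}(f_{\alpha})_{\mid\gamma}$ and the fact that $p\notin\{\ell=0\}$), and count the solutions converging to $0$ via the elementary Laurent-series order argument, which is exactly the content of the paper's Lemma \ref{l:computeord}. Your formulation via the quotient $\lambda(s)=u'(s)/v'(s)=t$ is only cosmetically different from the paper's $g(s)-t\,h(s)=0$, whose proof rewrites it in the same quotient form.
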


\begin{proof}
Since $\mathbf x_{\gamma}(s)$ is a Laurent series of finite order, so are $f_{\mid \gamma}$ and $\ell_{\mid \gamma}$.
 We need the following simple but key lemma.
 \begin{lemma}\label{l:computeord}
 Let $g, h: D \to \bC$ be Laurent series of finite order defined on some small disk $D\subset \bC$ centered at 0.
 Let $\nu_{G_{t}(s)}$ denote the number of non-zero solutions of the equation $G_{t}(s):= g(s) - t h(s) =0$  which converge to $0$ whenever $t \to 0$, counting multiplicities.
  We have: 
\begin{enumerate}
  \rm \item \it If  $\ord_{0} g(s) \ge \ord_{0} h(s)$ then $\nu_{G_{t}(s)} = \ord_{0} g(s) - \ord_{0} h(s)$.
 \rm \item \it  If  $\ord_{0} g(s) < \ord_{0} h(s)$ then $\nu_{G_{t}(s)} = 0$.
\end{enumerate}
\end{lemma}
\begin{proof}
 Let   $v:= -\ord_{0}h(s)$ and $r:= - \ord_{0}g(s)$.  Then the series $s^{v}h(s)$ and $s^{r}g(s)$ are both holomorphic and invertible at $0$. For $t\not=0$, the equation $G_{t}(s)=0$ is equivalent to the equation:
\begin{equation}\label{eq:sol}
 \frac{s^{r}g(s)}{s^{v}h(s)}\cdot s^{v-r} = t,
\end{equation}
where the fraction in the first factor is an invertible holomorphic function.
 
\smallskip
\noindent
 (a). If $v-r\ge 0$, the number of solutions $s(t)$ of the equation \eqref{eq:sol} which converge to 0 as $t\to 0$ is precisely $v-r$.

\noindent (b). In  case $v-r < 0$, when $s\to 0$, the left hand side of the equation \eqref{eq:sol}  converges to $+\infty$ in absolute value,
whereas on the right hand side $t\to 0$, which is a contradiction.
\end{proof}

\subsubsection{Computation of $m_{\gamma}(p,\alpha)$}

Let $\gamma$ be a branch of $\Gamma_{\sW}(\ell,f)$,  for some general $\ell \in \Omega$ $\ell(x) := \sum_{i}a_{i}x_{i}$, where $\mathbf a := (a_{1}, \ldots , a_{N})\in \bC^{N}$. 
 Let $(p,\alpha)\in \overline{\gamma} \subset \overline{\Gamma_{V}(\ell,f)}$.


The tangency condition in the above definition of $m_{\gamma}(p,\alpha)$ reads:
\begin{equation}\label{eq:tangcond}
  \biggl \langle  \bigl(\overline{\grad f} - \overline{t \mathbf a}\bigr) (\mathbf x_{\gamma}(s)) ,  \mathbf x'_{\gamma}(s) \biggr \rangle=0,
\end{equation}
 where $\overline{\ \cdot \ }$ means here complex conjugation.
 
What $m_{\gamma}(p,\alpha)$  counts is precisely the number of non-zero solutions $s=s_{i}$ of \eqref{eq:tangcond} which converge to 0 when $t \to 0$. 
%

Our equation \eqref{eq:tangcond} reads as:
\begin{equation}\label{eq:tangcond2}
 \biggl \langle  (\overline{\grad f} - \overline{t \mathbf a})(\mathbf x_{\gamma}(s)) ,  \mathbf x'_{\gamma}(s) \biggr \rangle=  \biggl \langle  \overline{\grad f}(\mathbf x_{\gamma}(s))  ,  \mathbf x'_{\gamma}(s) \biggr \rangle - t \sum_{i}a_{i}\mathbf x'_{\gamma, i}(s) =0.
\end{equation}

By using  Lemma \ref{l:computeord} we get that 
the number of solutions of  equation \eqref{eq:tangcond2} in variable $s$ which converge to $0$ as $t \to 0$ is equal to:
$$\ord_{0} \biggl \langle  \overline{\grad f}(\mathbf x_{\gamma}(s))  ,  \mathbf x'_{\gamma}(s) \biggr \rangle - \ord_{0} \sum_{i}a_{i}\mathbf x'_{\gamma,i}(s).$$

We have $\ord_{0} \sum_{i}a_{i}\mathbf x'_{\gamma,i}(s) = \ord_{0}\ell(\mathbf x_{\gamma}(s)) -1$ since $\sum_{i}a_{i}\mathbf x'_{\gamma,i}(s)$ is the derivative of $\ell(\mathbf x_{\gamma}(s))$. 
Since $\biggl \langle  \overline{\grad f}(\mathbf x_{\gamma}(s))  ,  \mathbf x'_{\gamma}(s) \biggr \rangle$ is the derivative of $f(\mathbf x_{\gamma}(s))$, its order is equal either to $\ord_{0}\bigl[f(\mathbf x_{\gamma}(s))- \lim_{s\to 0}f(\mathbf x_{\gamma}(s))\bigr] -1$ in case the limit $\lim_{s\to 0}f(\mathbf x_{\gamma}(s))$ is finite, 
or it is equal to $\ord_{0}f(\mathbf x_{\gamma}(s)) -1$ in case this limit is infinite.

Thus, after cancelling $+1$ against $-1$, by using the convention of notation $f_{\alpha}$, we get:
\[ m_{\gamma}(p,\alpha) = \ord_{0}f_{\alpha}(\mathbf x_{\gamma}(s))  - \ord_{0}\ell(\mathbf x_{\gamma}(s)).
\]
  This ends the proof of Proposition \ref{p:main2}.
\end{proof}

To prove Theorem \ref{t:main2} we use the computations from the proof of  Proposition \ref{p:main2} and pass to the coordinates at infinity. Along the way we will observe that changing the chart does not affect the results.
Our polynomial function $f \colon X\to \bC$  is the representative 
in the chart $x_{N+1}=1$ of the restriction to $\overline{X}$ of the rational function 
$$\frac{\tilde f(x_{1}, \ldots , x_{N+1})}{x_{N+1}^{d}} : \bP^{N} \dashrightarrow \bP^{1}.$$

%
  Without loss of generality, let us assume that the point $p$ is in the chart $x_{1}\not=0$, and more precisely $p := [1;0 ; \cdots ; 0]$.
  
  In the new chart,  our expression \eqref{eq:main3} transforms as follows: the polar branch $\overline{\gamma}$ becomes
  a branch denoted $\overline{\gamma}_{p}$ of the germ at $p$ of the polar locus, denoted $\overline{\Gamma}_{V,p}$, where we consider a Puiseux parametrisation of $\overline{\gamma}_{p}$ at the origin of the new coordinate system. We then get:
\begin{multline} 
 \ord_{0} (f_{\alpha} )_{\mid \gamma} = \ord_{0}\biggl( \frac{\overline{f}(x_{2}, \ldots , x_{N+1}) }{x_{N+1}^{d}} 
 -\alpha
 \biggr)_{\mid \overline{\gamma}_{p}} = \ \ \ \ \ \ \ \ \ \ \ \ \ \ \ \ \ \  \nonumber   \\    \ \ \ \ \ \ \ \ \  
 \mult_{0} \bigl( \{\overline{f} -  \alpha x_{N+1}^{d}  =0\}, \overline{\gamma}_{p}\bigr) - d \cdot \mult_{0}\bigl(\{x_{N+1}=0\}, \overline{\gamma}_{p}\bigr) 
\end{multline}

and 
$$  \ord_{0}\ell_{\mid \gamma}= \ord_{0}\biggl( \frac{\overline{\ell}(x_{2}, \ldots , x_{N+1})}{x_{N+1}} 
 \biggr)_{\mid \overline{\gamma}_{p}} = \mult_{0} \bigl( \{\overline{\ell}=0\}, \overline{\gamma}_{p}\bigr) -  \mult_{0}\bigl(\{x_{N+1}=0\}, \overline{\gamma}_{p}\bigr).
 $$
Note that, according to our convention on the notation $f_{\alpha}$,  in the above formula $\alpha$ occurs only if it is a finite value.

 We now remark that by our generic choice of $\ell$, the hyperplane $\{\overline{\ell}=0\}$ does not contain the point $p$, and thus $\mult_{0} \bigl( \{\overline{\ell}=0\}, \overline{\gamma}_{p}\bigr) =0$.
 
 Then, by applying again the key Lemma \ref{t:polardiscrim-new},  we obtain that  $ m_{\gamma}(p,\alpha)$ equals the expression:
  $$\mult_{0} \bigl( \{\overline{f} -\alpha x_{N+1}^{d} =0\}
  -   (d -1) \cdot \mult_{0}\bigl(x_{N+1}=0\}, \overline{\gamma}_{p}\bigr) 
  $$
  in case this is $\ge 0$, or we get that $m_{\gamma}(p,\alpha)=0$ if this expression is $\le 0$.
  
By taking now the sum over $\overline{\gamma}_{p}$, we then get our formula \eqref{eq:main4}.
This ends the proof of Theorem \ref{t:main2}.
 
 \subsection{The Morse index at an affine attractor}
 
We derive from the proof of Theorem \ref{t:main2} the corresponding formula for the Morse index $m_{p, V}$ in case $p\in X$, which also determines the set of attractors $M_{V}(f)$:

\begin{corollary}\label{c:main2}
Let  $p\in M_{V}(f)\subset X$ be an affine attractor of Morse points of $f_{t}$, for some positive dimensional stratum $V\in \sW$. We have:
\begin{equation}\label{eq:main1}
  m_{p,V} = \mult_{p}\left(\Gamma_{V,p}(\ell,f), f^{-1}(f(p))\right) - \mult_{p}\left(\Gamma_{V,p}(\ell,f), \ell^{-1}(\ell(p))\right).
\end{equation}
 \end{corollary}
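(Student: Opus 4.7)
The plan is to specialize the argument developed for Theorem \ref{t:main2} to the affine setting at $p \in X$, where $f$ and $\ell$ are regular functions taking finite values $f(p)$ and $\ell(p)$. In contrast to the situation at infinity, no positivity selector $\delta_{\geq 0}$ will be needed, which is why \eqref{eq:main1} comes out as a plain difference of intersection multiplicities.

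First I would decompose $m_{p,V}$ as a sum of local contributions over the irreducible branches $\gamma$ of the polar germ $\Gamma_{V,p}(\ell,f)$ through $p$. By Lemma \ref{t:polardiscrim-new}, any trajectory $q(t) \in V$ of a Morse point of $f_t$ that abuts at $p$ lies, as a set-germ at $p$, inside a single branch $\gamma$, so that $m_{p,V} = \sum_{\gamma} m_\gamma(p)$, where $m_\gamma(p)$ counts those Morse trajectories on $\gamma$ converging to $p$.

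Next, I would compute each $m_\gamma(p)$ by parametrising $\gamma$ via a Puiseux expansion $\mathbf{x}_\gamma(s)$ with $\mathbf{x}_\gamma(0) = p$, and then following verbatim the derivation of \eqref{eq:main3} in the proof of Proposition \ref{p:main2}. The tangency equation \eqref{eq:tangcond2} reduces to $(f \circ \mathbf{x}_\gamma)'(s) = t\,(\ell \circ \mathbf{x}_\gamma)'(s)$, so Lemma \ref{l:computeord} gives $m_\gamma(p) = \ord_0 (f \circ \mathbf{x}_\gamma)' - \ord_0 (\ell \circ \mathbf{x}_\gamma)'$ whenever this is non-negative. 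Since $f$ and $\ell$ are holomorphic at $p$, differentiation decreases the order by exactly one, and therefore
\[
\ord_0 (f \circ \mathbf{x}_\gamma)' = \mult_p(\gamma, f^{-1}(f(p))) - 1, \qquad \ord_0 (\ell \circ \mathbf{x}_\gamma)' = \mult_p(\gamma, \ell^{-1}(\ell(p))) - 1.
\]
Subtracting cancels the $-1$'s, and summing over the branches of $\Gamma_{V,p}(\ell,f)$ through $p$, via additivity of local intersection multiplicities, yields the formula \eqref{eq:main1}.

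The main subtlety is justifying the disappearance of $\delta_{\geq 0}$ compared with Theorem \ref{t:main2}. This should follow from the genericity $\ell \in \Omega$: the affine hyperplane $\ell^{-1}(\ell(p))$ then meets each branch $\gamma$ of $\Gamma_{V,p}(\ell,f)$ transversally to its tangent cone, so that $\mult_p(\gamma, \ell^{-1}(\ell(p)))$ equals the multiplicity of $\gamma$ at $p$, which is the minimum value taken by $\mult_p(\gamma, H)$ over smooth hypersurfaces $H$ through $p$. Hence $\mult_p(\gamma, f^{-1}(f(p))) \geq \mult_p(\gamma, \ell^{-1}(\ell(p)))$ for each branch, with strict inequality precisely on those branches that contribute Morse points, and the branchwise quantity extracted from Lemma \ref{l:computeord} is automatically non-negative. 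Translating the abstract genericity encoded in $\Omega$ into this concrete transversality statement is the main bookkeeping step in the argument.
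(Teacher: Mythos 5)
Your overall architecture coincides with the paper's: decompose $m_{p,V}=\sum_{\gamma}m_{\gamma}(p)$ over the branches of $\Gamma_{V,p}(\ell,f)$ through $p$, parametrise each branch, feed the tangency equation \eqref{eq:tangcond2} into Lemma \ref{l:computeord}, and convert orders of derivatives into intersection multiplicities after cancelling the $-1$'s. Up to that point your argument is the paper's proof.

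The divergence, and the gap, is in how you dispose of $\delta_{\ge 0}$. You claim that $\mult_p\bigl(\gamma,\ell^{-1}(\ell(p))\bigr)$ equals the multiplicity of the branch $\gamma$ at $p$ because the generic hyperplane $\{\ell=\ell(p)\}$ meets the tangent cone of $\gamma$ transversally. That transversality is not among the conditions defining $\Omega$ (Theorems \ref{t:bertini} and \ref{t:lefschetz} only give that $\ell$ is a stratified Morse function, that $\Gamma_{\sW}(\ell,f)$ is a reduced curve, and good behaviour at infinity), and it cannot be extracted by a naive genericity argument: the polar curve $\Gamma_{V}(\ell,f)$ itself varies with $\ell$, so you cannot fix the branch $\gamma$ first and then choose $\ell$ transverse to its tangent line afterwards. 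The statement you actually need --- that the generic polar curve is nowhere tangent to $\{\ell=\ell(p)\}$ at $p$ --- is a genuine theorem of L\^e--Teissier type, not bookkeeping, and as written your proof of the branchwise inequality $\mult_p\bigl(\gamma,f^{-1}(f(p))\bigr)\ge\mult_p\bigl(\gamma,\ell^{-1}(\ell(p))\bigr)$ is incomplete; this inequality is exactly what distinguishes the affine formula \eqref{eq:main1} from the formula at infinity \eqref{eq:main4}. The paper sidesteps the issue: since $p\in\Sing_{\sW}f$, the germ $f-f(p)$ lies in the square of the maximal ideal of $\cO_{X,p}$, and for a branch of the polar curve of the pair $(\ell,f)$ this forces $\ord_{0}f_{|\gamma}\ge\ord_{0}\ell_{|\gamma}$ directly, by the classical polar-curve inequality (\cite[Proposition 1.2]{Le}, \cite[Corollary 2.3]{Ti-ens}). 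Either import that inequality or actually prove the polar transversality you invoke; without one of the two, the non-negativity of each $m_{\gamma}(p)$ is not established.
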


Formula \eqref{eq:main1} has been proved recently for $f$ with isolated singularity on $X$, in the case $V$ is the maximal stratum $X_{\reg}$ in \cite{MT1}, and for any stratum $V\in \sW$ in  \cite{MT2}.

\begin{proof}
Let $p\in \Pol_{V}(f)$, and let  $\gamma \subset \Gamma_{V}(\ell,f)$ be  some local polar branch $\gamma \subset \Gamma_{V}(\ell,f)$ at $p$,
parametrised as $\gamma(s)$ for $s$ is a small disk $D$ centred as 0 and such that  $\gamma(0)=p$.
  Let us set:
 $$m_{\gamma}(p) := \# \left\{ \begin{array}{l} \mbox{points of  tangency of } \gamma \mbox{ with  the fibres of }  f_{t} \\ \mbox{ which converge to } p \mbox{ as } t\to 0 \end{array} \right\}.$$
 
 In our setting, since the germ of $f$ at $p$ is in the square of the maximal ideal of the local algebra $\cO_{X,p}$,
one has\footnote{See e.g. \cite[Proposition 1.2]{Le}, \cite[Corollary 2.3]{Ti-ens}.} the inequality $\ord_{0} f _{\mid \gamma} \ge \ord_{0}\ell_{\mid \gamma}$.

 Using Lemma \ref{t:polardiscrim-new} in the same way as in the proof of Theorem \ref{t:main2}, we obtain the
 following formula which is analogous to \eqref{eq:main3} in Proposition \ref{p:main2}:
  \begin{equation}\label{eq:main5}
m_{\gamma}(p) = \ord_{0} f _{\mid \gamma} -  \ord_{0}\ell_{\mid \gamma} \ge 0.
\end{equation}
  In contrast,  in the setting ``at infinity'' of \eqref{eq:main3},  the above difference
could have been negative for  some polar branches. This behaviour can be seen in  Examples \ref{ex:lau-s} and \ref{ex:lau-s-more} below.

Lastly, the following equalities hold by definition:  
$$ \mult_{p}\left(\Gamma_{V,p}(\ell,f), f^{-1}(f(p))\right)  = \sum_{\gamma \subset\Gamma_{V,p}}\ord_{0} f _{\mid \gamma},$$
$$\mult_{p}\left(\Gamma_{V,p}(\ell,f), \ell^{-1}(\ell(p))\right)  = \sum_{\gamma  \subset\Gamma_{V,p}} \ord_{0}\ell_{\mid \gamma},$$
and also:
$$m_{p,V} = \sum_{\gamma \subset \Gamma_{V,p}}m_{\gamma}(p).$$
Altogether, we obtain our formula \eqref{eq:main1} by summing up \eqref{eq:main5} over all branches $\gamma$.
\end{proof}
 

\section{Examples}

\subsection{Example}\label{ex:classic}
 Let $X =\bC^{2}$,  $f:=x+x^{2} y$, and $\ell := x+y$.  
 
 The stratifications $\sW$ and $\sS$ have a single stratum $\bC^{2}$, since  $\Sing f = \emptyset$. 
  The general linear Morsification $f_{t}= f-t(x+y)$ has two Morse points\footnote{As one can easily check by direct computations.} which go to infinity when $t\to 0$. By Lemma \ref{l:main2},
  the trajectories of these Morse points abut to  the set of ``point attractors'' included in $\Pol^{\infty}(f)$.  By computing the polar curve $\Gamma(\ell, f)$, one easily finds that  $\Pol^{\infty}(f) = \{(p,0), (q,[1;0])\}$, where $p = [0;1;0]\in \bP^{2}\cap H^{\infty}$, and $q= [2;1;0]\in \bP^{2}\cap H^{\infty}$.
     Let us compute the Morse indices at those two points.
 
  The polar curve $\Gamma(\ell, f) = \{ 2xy +1 - x^{2} =0\} \subset \bC^{2}$ has a unique branch $\gamma =\Gamma(\ell, f)$ at $p$, and a parametrisation of it is $x(s) =s$, $y(s) = \frac{s}2 - \frac1{2s}$. We then have 
  $$ \lim_{s\to 0}f(x(s), y(s)) = 0, \ \ \ \lim_{s\to 0} \vert\vert \ell(x(s), y(s)) \vert\vert = \infty,$$
We get:
    \[ \ord_{0}(\ell_{\mid \gamma}) = \ord_{0}\left( s + \frac{s}2 - \frac1{2s} \right) = - 1\]
    and
  \[ \ord_{0}(f_{\mid \gamma}) = \ord_{0}\left( s + s^{2}\left(\frac{s}2 - \frac1{2s} \right) \right) = \ord_{0}\left(s + \frac{s^{3}}2 - \frac{s}2\right) = 1.\]
By applying Proposition  \ref{p:main2}, we obtain:
 \[ m_{\gamma}(p,0) = 1- (-1) = 2,
 \]
 which tells  that precisely two Morse points of $f_{t}$ go to infinity asymptotically to the fibre $f^{-1}(0)$ and converge to the point $p = [0;1;0]\in H^{\infty}$ as $t \to 0$.
 
 Let us also verify Theorem \ref{t:main2}.  Changing the chart of $\bP^{2}$ to $y=1$ with the point $p$ as the origin,  we get the  following parametrisation of the polar branch $\overline{\gamma}_{p} : z(s) = s, x(s) = \omega s^{2} +\hot$, for some coefficient $\omega\in \bC$,  where, here and later, $\hot$ stands for higher order terms in the variable $s$. Then:
 $$ \mult_{0}(\{xz^{2} +x^{2}=0\}, \overline{\gamma}_{p}) = 4, \ \ \mult_{0}(\{z=0\}, \overline{\gamma}_{p})  =1,
 $$
 thus formula \eqref{eq:main4} yields: $m_{\gamma}(p,0) = 4 - 2\cdot 1 = 2$, which confirms the result already obtained above.
 
 Let us now compute the Morse index at $(q, [1;0])$. In the chart $y=1$ the point $q$ becomes $(2,0)$, and we  translate it to become the origin $(0,0)$. In the new coordinates, the equation of $\overline{f}_{q}$ is then $(2-x) z^{2}+ (2-x)^{2}=0$. That of the polar branch germ $\overline{\gamma_{q}}$ is: $x(2-x) +z^{2} =0$ and has parametrisation $z(s) = s, x(s) = -\frac12 s^{2} +\hot$ at $(0,0)$.
 We get:
 $$ \mult_{0}\bigl(\{(2-x) z^{2} + (2-x)^{2}=0\}, \overline{\gamma_{q}}\bigr) = 0 \ \mbox{ and }  \mult_{0}(\{z=0\}, \overline{\gamma_{q}})  =1.
 $$
Formula \eqref{eq:main4} then reads: $0-2 <0$, in which case we have $m_{\gamma}(q,[1;0]) =0$,  confirming that  there are no Morse points of $f_{t}$ which abut to the point $q$ when $t\to 0$.

\subsection{Example.}\label{ex:lau-s}
Let  $f: \bC^{2}\to \bC$,  $f:=xy+\frac13 x^{3} y^{2}$, and $\ell := x+y$. This example\footnote{Constructed with the help of Lauren\c tiu P\u aunescu.} illustrates the following phenomenon (which  does not occur in the preceding example):  the general linear Morsification $f_{t} = f-t\ell$ has a Morse point which escapes to infinity when $t\to\ity$, and along its trajectory the value of $f$ becomes infinite. 

We have $\Sing f = \{(0,0)\}$,  and the stratification $\sS$ has one single stratum of positive dimension, namely $\bC^{2}\setminus \{(0,0)\}$. 
The polar curve is $\Gamma(\ell, f) = \{ y -x + x^{2}y^{2} - \frac23 x^{3}y =0\} \subset \bC^{2}$.
The intersection $\overline{\Gamma(\ell, f)}\cap H^{\ity}$ consists of the points  
$p = [1;0;0]$, $q = [0;1;0]$, and $w = [3;2;0]$. 

\subsubsection{Computations at $\cO :=(0,0)$.}
The branch at $\cO$ of the curve $\Gamma(\ell, f)$ is parametrised as $\gamma: x(s) = s, y(s) = s +\hot$\\
We obtain $\ord_{0} f(\gamma(s)) = 2$ and $\ord_{0} \ell(\gamma(s)) = 1$, and by applying Corollary \ref{c:main2} we get the Morse index $m_{\cO} = 2-1 =1$, which confirms that the Morse singularity of $f$ deforms into a Morse singularity of $f_{t}$.

\subsubsection{Computations at $q$.}
The branch of $\Gamma(\ell, f)$ at $q$ is parametrised as $\gamma: x(s) = s, y(s) = - \frac{1}{s^{2}} +\hot$\\
We obtain $\ord_{0} f(\gamma(s)) = -1$ and $\ord_{0} \ell(\gamma(s)) = -2$, and $\lim_{s\to 0} \Vert f(\gamma(s)) \Vert = \infty$. By applying Proposition \ref{p:main2} we get the Morse index $m_{q} = (-1) - (-2) =1$, which means that one Morse singularity of $f_{t}$ abuts to the point $q\in H^{\ity}$ when $t\to 0$, whereas the limit of $f$ along this trajectory is infinite.

\subsubsection{Computations at $p$.}
The branch of $\Gamma(\ell, f)$ at the point $p$ is parametrised as $\gamma: x(s) = \frac1s, y(s) = - \frac12 s^{2} +\hot$\\
We obtain $\ord_{0} f(\gamma(s)) = 1$ and $\ord_{0} \ell(\gamma(s)) = -1$, and $\lim_{s\to 0}  f(\gamma(s))  = 0$. By applying Proposition \ref{p:main2} we get the Morse index $m_{p} = 1 - (-1) =2$, which means that two Morse singularities of $f_{t}$ abut to the point $p\in H^{\ity}$ when $t\to 0$,  and their trajectories are asymptotic to the fibre $f^{-1}(0)$.

\subsubsection{Computations at $w$.}
At the point $w$, the branch of $\Gamma(\ell, f)$  is parametrised as $\gamma: x(s) = \frac1s, y(s) = -  \frac2{3s} +\hot$\\
We obtain $\ord_{0} f(\gamma(s)) = -2$ and $\ord_{0} \ell(\gamma(s)) = -1$. The difference is $-2 - (-1) <0$, and in this case, by Proposition \ref{p:main2}, the Morse index $m_{w}$ equals 0. This means that no Morse 
 singularity of $f_{t}$ abuts to the point $w\in H^{\ity}$ at infinity when $t\to 0$.

\noindent Conclusions: 
\\
\noindent (1).  The limit points of the trajectories of the four Morse points of $f_{t}$
are $\cO$, $p$ and $q$. \\
\noindent (2). 
The corresponding  Morse indices of $f$ at these points are $m_{\cO} =1$, $m_{q}  =1$, $m_{p} =2$, respectively.
 
\subsection{Example.}\label{ex:lau-s-more}
 We modify the preceding example as $f:=xy+\frac13 x^{3} y^{2}+ x^{6}$, with $\ell := x+y$ still as general linear function.
The singular locus $\Sing f$ consists of 8 points (all being Morse singularities). With similar computations as above, we find that the general linear Morsification $f_{t}= f-t\ell$ has 9 Morse points. When $t\to 0$,  one Morse point escapes to infinity, namely it goes to the point $q = [0;1;0]$,  but its trajectory is not asymptotic to any fibre of $f$.

Since the polar curve intersection with the line at infinity $\overline{\Gamma(\ell, f)}\cap H^{\ity}$ is the single point $q = [0;1;0]$ without being asymptotic to any fibre of $f$, this time $f$ has no ``singularities at infinity'' according to the criteria\footnote{Due to the absence of singularities at infinity, the Milnor number $\mu_{f}=8$  also equals the top Betti number of the generic fibre of $f$.} proved in \cite{ST-duke}, see also \cite{Ti-book}.



\end{document}